\titleformat{\section}{\normalfont\Large\bfseries}{\thesection}{1em}{}
\newtheorem{theorem}{\textbf{Theorem}}[section]
\newtheorem{proposition}[theorem]{\textbf{Proposition}}
\newtheorem{lemma}[theorem]{\textbf{Lemma}}
\theoremstyle{definition}
\newtheorem{definition}[theorem]{Definition}
\newtheorem{example}[theorem]{Example}
\theoremstyle{remark}
\newtheorem{remark}[theorem]{Remark}
\begin{document}

\title{Statistical Biharmonicity of Identity Maps}
\author{Ryu Ueno}

\date{}

\maketitle

\begin{abstract}
    A statistical structure is a pair of a Riemannian metric and a torsion-free affine connection that satisfies the Codazzi equation, and a manifold with a statistical structure is called a statistical manifold. 
    In a certain setting, the tension field of the identity map between two statistical manifolds of different statistical structures is a scalar multiple of the Tchebychev vector field, which is an important object in affine differential geometry.
    We observe the statistical biharmonicity of this identity map.
    In such a setting, we determine statistical manifolds that has this identity map as a biharmonic map.
\end{abstract}

\makeatletter
\renewcommand{\@makefntext}[1]{\noindent#1}
\makeatother
\renewcommand{\thefootnote}{}
\footnotetext{\noindent 2020 \textit{Mathematics Subject Classification.} Primary 53B12; Secondary 53A15, 53C43, 58E20.\\ The author is supported by JST SPRING, Grant Number JPMJSP2119.}

\section*{Introduction} 
Statistical manifolds have been widely studied since their introduction in information geometry in 1982 \cite{Am}. However, they have also appeared in affine differential geometry.
Given a locally strongly convex hypersurface immersion in real affine space, a statistical structure and its Tchebychev vector field are induced on the manifold from a chosen equiaffine transversal vector field.
In centroaffine geometry, the shape operator of the hypersurface is given by the Tchebychev vector field\cite{Liu1995TheCT}, and in equiaffine geometry, the transversal vector field is chosen such that the Tchebychev vector field vanishes \cite{NomizuSasaki}. 
Generally, statistical manifolds with vanishing Tchebychev vector fields are said to satisfy the \textit{equiaffine condition}.
The Tchebychev vector field has also been studied in the context of applications in information geometry \cite{MR2280051}.

Riemannian manifolds are the simplest statistical manifolds. 
In Riemannian geometry, the tension field is induced from the first variational formula of the energy functional for mappings between Riemannian manifolds, and mappings with vanishing tension fields are called harmonic maps. 
For example, the identity map on a Riemannian manifold is a trivial harmonic map.
Furthermore, as a generalization of harmonic maps, the notion of \textit{biharmonic map} was introduced by J. Eells and L. Lemaire in 1983 as a critical point of the bi-energy functional for mappings between Riemannian manifolds\cite{EL}. 
    
The notion of the tension field can be extended to mappings from a manifold equipped with a Riemannian metric and an affine connection to a manifold with an affine connection.
On the other hand, a \textit{statistical biharmonic map} must be defined between
two statistical manifolds, which is a class of mapping induced by a variational problem\cite{FU}. 
In particular, a mapping between statistical manifolds of vanishing tension fields is a trivial statistical biharmonic map. 

Let $(M,g,\nabla)$ be a statistical manifold, $T$ the Tchebychev vector field, $\overline{\nabla}$ the conjugate connection of $(M,g,\nabla)$, and $\nabla^g$ the Levi-Civita connection of $g$.
If we denote the tension fields of $\mathrm{id}:(M,g,\nabla)\to(M,g,\nabla^g)$, $\mathrm{id}:(M,g,\overline{\nabla})\to(M,g,\nabla^g)$ by $\tau(\mathrm{id})$, $\overline{\tau}(\mathrm{id})$, respectively, then $\tau(\mathrm{id})=-T$ and $\overline{\tau}(\mathrm{id})=T$ hold. 
Thus, the statistical manifold satisfies the equiaffine condition if and only if equation $\tau(\mathrm{id})=\overline{\tau}(\mathrm{id})=0$ holds. 
Let the statistical bi-tension field of the identity maps $\mathrm{id}:(M,g,\nabla)\to(M,g,\nabla^g)$, $\mathrm{id}:(M,g,\overline{\nabla})\to(M,g,\nabla^g)$ be $\tau_2(\mathrm{id})$, $\overline{\tau}_2(\mathrm{id})$, respectively. 
Our main objective is to prove in Theorem \ref{main1}, that the statistical manifold $(M,g,\nabla)$ satisfying the equation $\tau_2(\mathrm{id})=\overline{\tau}_2(\mathrm{id})=0$ is equivalent to the Tchebychev vector field $T$ satisfying the following equations (\ref{eq1}) and (\ref{eq2}).
\begin{equation}
    \label{eq1}
        \Delta_g T + \sum_{i=1}^m\mathrm{Ric}^g(e_i,T)e_i=0, \tag{T1}
\end{equation}
\begin{equation}
    \label{eq2}
        \mathrm{div}^g(T)T+\nabla^g_TT=0. \tag{T2}
\end{equation}
Therefore, statistical manifolds satisfying the equations (\ref{eq1}) and (\ref{eq2}) will be said to satisfy the \textit{semi-equiaffine condition}.

On a statistical manifold $(M,g,\nabla)$, the equations (\ref{eq1}) and (\ref{eq2}) imply that the Tchebychev vector field $T$ is a vector field that characterizes the space $(M,g)$ if it is non-zero. 
Thus in Theorem \ref{main2}, we determine statistical manifolds $(M,g,\nabla)$ of constant sectional curvature satisfying the semi-equiaffine condition, in the case where $(M,g)$ is a complete Riemannian manifold of constant curvature.


\section{Preliminaries}
\subsection{Statistical manifolds}

Throughout this paper, all the objects are assumed to be smooth.
Let $M$ be an orientable manifold of dimension $m \geq 2$, $C^{\infty}(M)$ the set of smooth functions on $M$, and $\Gamma(E)$ the set of sections of a vector bundle $E$ over $M$.

\vspace{0.5\baselineskip}

\begin{definition}
    Let $g$ be a Riemannian metric, $\nabla$ a torsion-free affine connection on $M$. 
    The pair $(g,\nabla)$ is called a statistical structure on $M$ if it satisfies the Codazzi equation:
    \begin{equation*}
        (\nabla g)(Y,Z;X)=(\nabla_X g)(Y,Z)=(\nabla_Yg)(X,Z),\quad X, Y, Z \in \Gamma(TM).
    \end{equation*}
    The triplet $(M,g,\nabla)$ is called a \textit{statistical manifold}.
\end{definition}
On a Riemannian manifold $(M,g)$, we denote by $\nabla^g$ the Levi-Civita connection. Here, since $\nabla^gg=0$, thus the triplet $(M,g,\nabla^g)$ is the simplest statistical manifold. 
In this case, the statistical manifold is called \textit{Riemannian}.

\begin{definition}
Let $(M,g,\nabla)$ be a statistical manifold.

$(1)$ \ 
Set $K=K^{(g,\nabla)} \in \Gamma(TM^{(1,2)})$ by
$$
K(X,Y)= K_XY= \nabla_X Y-\nabla^g_X Y,
\quad X, Y \in \Gamma(TM).   
$$
We also denote it by $K^M$ if necessary. 

$(2)$ \ 
Define an affine connection $\overline{\nabla}$ by
$$
Xg(Y,Z)=g(\nabla_XY,Z)+g(Y, \overline{\nabla}_XZ),
\quad X, Y, Z \in \Gamma(TM),    
$$
and call it the {\sl conjugate connection}\/ of $\nabla$ with respect to $g$. Conjugate connections are also called {\sl dual connections}\/. It is easy to check that $(M,g,\overline{\nabla})$ is also a statistical manifold.

$(3)$ \ 
Define the \textit{curvature tensor field} $R^\nabla \in \Gamma(TM^{(1,3)})$ 
of $\nabla$ by
$$
R^\nabla (X,Y)Z=\nabla_X \nabla_Y Z-\nabla_Y \nabla_X Z-\nabla_{[X,Y]}Z,
\quad X, Y, Z \in \Gamma(TM). 
$$
On a statistical manifold $(M,g,\nabla)$, we often denote $R^{\nabla}$ by $R$, 
$R^{\nabla^g}$ by $R^g$, and $R^{\overline{\nabla}}$ by $\overline{R}$, for short. 

$(4)$ \
Define the \textit{Ricci curvature tensor field} $\operatorname{Ric}^{\nabla}\in\Gamma(TM^{(0,2)})$ of $\nabla$ by
    $$
    \operatorname{Ric}^{\nabla}(X,Y)=\sum_{i=1}^mg(R^{\nabla}(e_i,X)Y,e_i),\quad X,Y\in\Gamma(TM),
    $$
where $\{e_1,\dots,e_m\}$ is an orthonormal frame on $M$ with respect to $g$. On a statistical manifold $(M,g,\nabla)$, we often denote $\operatorname{Ric}^{\nabla}$ by $\operatorname{Ric}$ and 
$\operatorname{Ric}^{\nabla^g}$ by $\operatorname{Ric}^g$, for short. 

$(5)$ \
Define a tensor field $L=L^{(g,\nabla)} \in\Gamma(TM^{(1,3)})$ by
    $$
    g(L(Z,W)X,Y) = g(R^\nabla (X,Y)Z,W),\quad X,Y,Z,W\in\Gamma(TM),
    $$
and call it the \textit{curvature interchange}  tensor field  
 of $(g,\nabla)$. We denote it $L^M$ if necessary. We also denote $\bar{L}=L^{(g, \overline{\nabla})}$.
\end{definition}

\begin{remark}
The following formulae hold for $X, Y, Z, W \in \Gamma(TM)$: 
\begin{eqnarray}
&&
\overline{\nabla}_X Y=\nabla^g_X Y-K(X,Y), \label{Kconjg}\\
&&
\nabla^g_X Y=\frac{\nabla_X Y+\overline{\nabla}_X Y}{2},\label{Leviconj}\\
&&
g(\overline{R}(X,Y)Z,W)=-g(Z, R(X,Y)W),\label{intcurvature}\\
&&
L(X,Y)Z+\overline{L}(X,Y)Z=R(X,Y)Z+\overline{R}(X,Y)Z.\label{sumcurv}
\end{eqnarray}
\end{remark}

\vspace{0.5\baselineskip}

\begin{definition}
A statistical manifold $(M, g, \nabla)$ is said to be {\sl conjugate symmetric}\/
if $R=L$ holds. 
\end{definition}

\begin{remark}
The conjugate symmetricity of $(M,g,\nabla)$ is equivalent to each of the following conditions.
\renewcommand{\labelenumi}{$(\theenumi)$}
\begin{enumerate}
    \item $R=\overline{R}$
    \item $\nabla^gK$ is symmetric on $M$.
\end{enumerate}
\end{remark}

\vspace{0.5\baselineskip}

\begin{definition}
    A statistical manifold $(M,g,\nabla)$ is said to be of constant curvature $\lambda$ if it satisfies
    \begin{equation*}
        R(X,Y)Z=\lambda(g(Y,Z)X-g(X,Z)Y),\quad X,Y,Z\in\Gamma(TM)
    \end{equation*}
    for a real number $\lambda$.
\end{definition}

\vspace{0.5\baselineskip}

\begin{definition}
Let $(M,g,\nabla)$ be a statistical manifold. The vector field $T=T^{(g, \nabla)}=\mathrm{tr}_gK \in \Gamma(TM)$ is called the {\sl Tchebychev vector field} of $(M,g,\nabla)$. Here, if $T=0$, then $(M, g, \nabla)$ is said 
to satisfy the {\sl equiaffine condition}\/ 
or to satisfy the {\sl apolarity condition}\/.  
\end{definition}

The next remark follows from a straightforward calculation (see \cite{NomizuSasaki} for details).
\begin{remark}
    \label{symric}
    The Tchebychev vector field on statistical manifolds gives the following properties.
    \renewcommand{\labelenumi}{$(\theenumi)$}
    \begin{enumerate}
        \item On a statistical manifold $(M,g,\nabla)$, the symmetry of $\operatorname{Ric}$ is equivalent to $T$ satisfying the following equation$:$
    \begin{equation}
    \label{symvec}
        g(\nabla^g_XT,Y)=g(\nabla^g_YT,X),\quad X,Y\in\Gamma(TM).
    \end{equation}
        \item If there exists a function $\varphi\in C^{\infty}(M)$ such that $g(T,X)=d\varphi(X)$ for all $X\in\Gamma(TM)$, then the volume form $\theta=e^{\varphi}\omega_g$ is $\nabla$-parallel, that is, $\nabla\theta=0$ holds. Here, $\omega_g$ is the volume form of $g$. 
        In particular, the condition $T=0$ on a statistical manifold is equivalent to the volume form $\omega_g$ of $g$ satisfying $\nabla \omega_g=0$.
    \end{enumerate}
\end{remark}

\vspace{0.5\baselineskip}

\begin{example}
\label{eqs}
    Let $(M,g)$ be a Riemannian manifold, and $C\in\Gamma(TM^{(0,3)})$ a totally symmetric tensor field. We define an affine connection $\nabla$ on $M$ by
    \begin{equation}
        \label{cubic}
        g(\nabla_X Y,Z)=g(\nabla^g_X Y,Z)-\frac{1}{2}C(X,Y,Z),\quad X,Y,Z\in\Gamma(TM). 
    \end{equation}
    Then, the pair $(g,\nabla)$ is a statistical structure on $M$. Here, we have 
    \renewcommand{\labelenumi}{$(\theenumi)$}
    \begin{enumerate}
        \item $C= \nabla g$,
        \item $C(X,Y,Z)=-2g(K_X Y,Z)$ for $X,Y,Z\in\Gamma(TM)$,
        \item $g(T,X)=-2\mathrm{tr}_gC(\cdot,\cdot,X)$ for $X\in\Gamma(TM)$.
    \end{enumerate}
    
\end{example}

\vspace{0.5\baselineskip}

\begin{example}\label{Centroaffine Geometry}
Let $\mathbb{R}^{m+1}$ be the real coordinate vector space, and $D$ be the standard flat affine connection on $\mathbb{R}^{m+1}$. 
The general linear group $GL(m+1, \mathbb{R})$ acts on this space, preserving $D$ invariant. 
The properties found on hypersurfaces in $\mathbb{R}^{m+1}$ that are invariant under the action induced from that of $GL(m+1, \mathbb{R})$, is the study of centroaffine geometry.
Let $f:M \to \mathbb{R}^{m+1}$ be a centroaffine immersion, that is, $f$ is an immersion from a manifold $M$ to $\mathbb{R}^{m+1}$ that satisfies
$T_{f(x)}\mathbb{R}^{m+1}=(df)_xT_xM \oplus \mathbb{R} f(x)$ at each point $x \in M$. 
The surface $M$ is called a centroaffine surface. 
According to this decomposition, define $\nabla, h$ by 
\begin{eqnarray*}
&&
D_Xf_*Y=f_*\nabla_XY- h(X,Y)f, \quad X,Y\in \Gamma(TM).
\end{eqnarray*}
The affine connection $\nabla$ is a torsion-free projectively flat affine connection, and $h\in\Gamma(TM^{(0,2)})$ is a symmetric tensor field that satisfies
$$
h=\frac{1}{m-1}\operatorname{Ric}^{\nabla}.
$$
If $h$ is a definite metric, the centroaffine immersion $f$ is called \textit{l.s.c.}, which is short for locally strongly convex. 
Define $g=\pm h$, where the sign is chosen such that $g$ is a positive definite metric.
Since $\nabla$ is projectively flat,
$$
\nabla g = \pm\frac{1}{m-1}\nabla(\operatorname{Ric}^{\nabla})
$$
is totally symmetric. 
Therefore, the pair $(g,\nabla)$ obtained here is a statistical structure on $M$, called the statistical structure induced by a l.s.c. centroaffine immersion. Here, we have 
\begin{equation}
    R(X,Y)Z = \pm(g(Y,Z)X-g(X,Z)Y),\quad X,Y,Z\in\Gamma(TM),
\end{equation}
thus the statistical manifold obtained by an l.s.c. centroaffine immersion is always of constant curvature $\pm 1$.
On the other hand, it is known that statistical manifolds of constant curvature $\pm 1$ can locally be induced by an l.s.c. centroaffine immersion \cite{MR1030710}. 
Additionally, if the Tchebychev vector field $T$ vanishes on $M$, the centroaffine hypersphere is called a proper affine hypersphere, an important subject in the field of equiaffine geometry.

The tensor field $\mathcal{T}=\mathcal{T}^{(g,\nabla)}=\nabla^gT=\in \Gamma(TM^{(1,1)})$ is called the {\sl Tchebychev operator}, it serves as the shape operator of centroaffine surfaces \cite{MR1286138}. 
By Theorem 3.1 in\cite{Furuhata2006TheCM}, the Tchebychev operator $\mathcal{T}$ on centroaffine hypersurface satisfies $\mathcal{T}=0$ if the center map of $f$ is centroaffinely congruent to $f$ itself. 
\end{example}

Here is one example of centroaffine immersion from \cite{Furuhata2006TheCM}.
\begin{example}
    Let $M=\{(x^1,x^2)\in\mathbb{R}^2 \mid x^1,x^2>0\}$ and define a locally strongly convex centroaffine immersion by $f:M\to\mathbb{R}^3;(x^1,x^2)\mapsto(x^1,x^2,(x^1)^{-a_1}(x^2)^{-a_2})$, with $a_1,a_2>0$. Induce a statistical structure $(g,\nabla)$ on $M$ by 
    \begin{equation*}
        D_Xf_*Y=f_*(\nabla_X Y)+g(X,Y)f,\quad X,Y\in\Gamma(TM).
    \end{equation*}
    We have
    \begin{equation*}
        g = \frac{1}{a_1+a_2+1}\sum_{i=1}^2\frac{a_i(a_j+\delta_{ij})}{x^ix^j}dx^i\otimes dx^j,
    \end{equation*}
    \begin{equation*}
        \nabla_{\frac{\partial}{\partial x^i}}\frac{\partial}{\partial x^j}=-\frac{a_i(a_j+\delta_{ij})}{a_i+a_j+1}\left(\frac{x^1}{x^ix^j}\frac{\partial}{\partial x^i}+\frac{x^2}{x^ix^j}\frac{\partial}{\partial x^j}\right),
    \end{equation*}
    \begin{equation*}
        \nabla^g_{\frac{\partial}{\partial x^i}}\frac{\partial}{\partial x^j}=-\frac{\delta_{ij}}{x^i}\frac{\partial}{\partial x^i}.
    \end{equation*}
    The Tchebychev vector field $T$ is given by
    \begin{equation*}
        g(T,\cdot)=(1-a_1)\frac{1}{x^1}dx^1+(1-a_2)\frac{1}{x^2}dx^2,
    \end{equation*}
    while the Tchebychev operator $\mathcal{T}$ vanishes globally on $M$.
\end{example}

\subsection{Connections on the vector bundles over statistical manifolds}
In this subsection, we use notations and definitions from \cite{FU}. 
Let $u:(M,g,\nabla^M)\to(N,h,\nabla^N)$ be a smooth mapping between statistical manifolds. 
On the vector bundle $u^{-1}TN$, we denote by $\nabla^u$ the connection induced from $\nabla^N$, that is, $\nabla^u_X U=\nabla^N_{u_* X}U \in \Gamma(u^{-1}TN)$ for 
$X \in \Gamma(TM)$ and $U \in \Gamma(u^{-1}TN)$.
We also denote by $\overline{\nabla}^{\substack{\scalebox{0.45}{\phantom{i}}\\u}}$, $\widehat{\nabla}^u$ for the connections on $u^{-1}TN$ induced by $\overline{\nabla}^{\substack{\scalebox{0.3}{\phantom{i}}\\N}}$, $\nabla^h$, respectively.

\vspace{0.5\baselineskip}

\begin{definition}[\cite{FU}]
\label{FUdef}
Let $(M, g, \nabla^M)$ be a statistical manifold, and $E \to M$ a vector bundle 
with connection $\nabla^E : \Gamma(E) \to \Gamma(E \otimes T^*M)$. 
We define the operator 
$\Delta^E=\Delta^{(g, \nabla^M, \nabla^E)} : \Gamma(E) \to \Gamma(E)$ as 
\begin{equation*}
    \Delta^E \xi 
    =\mathrm{tr}_g \left\{ (X,Y) \mapsto \nabla^E_X\nabla^E_Y \xi -\nabla^E_{\nabla^M_X Y}\xi \right\}, 
\end{equation*}
and call it the {\sl statistical connection Laplacian}\/ 
with respect to $(g, \nabla^M)$ and $\nabla^E$. 
\end{definition}

In Definition \ref{FUdef}, we denote $\Delta^E=\Delta^u$ if $E=u^{-1}TN$ and $\nabla^E=\nabla^u$. 
Similarly, we denote the statistical connection Laplacian by $\bar{\Delta}^u$, $\widehat{\Delta}^u$ if $E=u^{-1}TN$ and $\nabla^E=\overline{\nabla}^{\substack{\scalebox{0.45}{\phantom{i}}\\u}}$, $\nabla^E=\widehat{\nabla}^u$, respectively. 
If $(M,g,\nabla^M)$ is Riemannian, $E=M\times\mathbb{R}$, and $\nabla^E$ is the standard affine connection on $\mathbb{R}$, then $\Delta^E$ is the standard Laplace operator $\Delta_g$ on $(M,g)$. If $(M,g,\nabla^M)$ is Riemannian, $(M,g,\nabla^M)=(N,h,\nabla^N)$, and $u=\mathrm{id}$, then $\Delta^E$ is the rough Laplacian which we will also denote by $\Delta_g$. 
For a vector field $V\in\Gamma(TM)$ that satisfies equation $(\ref{symvec})$, the following equation holds on $(M,g)$ for $X\in\Gamma(TM)$:
\begin{equation}
    \label{intchangelap}X\operatorname{div}^g(V)=g(\Delta_gV,X)-\operatorname{Ric}^g(V,X).
\end{equation}
The proof can be found in \cite{10.2969/jmsj/03620295}.

\section{Statistical biharmonic maps}\label{sec3}
In this section, we will go over statistical biharmonic maps between statistical manifolds. 
For more details, see \cite{FU}. 
Let $(M,g,\nabla)$ be a statistical manifold, $(N,\nabla^N)$ a manifold equipped with a torsion-free affine connection, and let $u:M\to N$ a smooth map. 
The tension field $\tau(u)=\tau^{(g,\nabla,\nabla^N)}(u)$ of $u$ is defined by 
\begin{equation*}
\tau(u) = \mathrm{tr}_g\{(X,Y)\mapsto\nabla^u_X u_*Y-u_*\nabla_X Y\} 
        \in \Gamma(u^{-1}TN).
\end{equation*}
Here, $\nabla^u$ is the connection induced on $u^{-1}TN$ by $\nabla^N$.

\begin{example}
    Let $c:\left((a,b), g_0, \nabla^{g_0}\right)\to(N,\nabla^N)$ be a curve on $(N,\nabla^N)$, where $g_0$ is the Euclidean metric. 
    If we let $\dot{c}=c_*\dfrac{d}{dt}$, then $\tau(c)=\nabla^N_{\dot{c}}\dot{c}$ holds.
\end{example}

\vspace{0.5\baselineskip}

\begin{example}
\label{id}
    Let $(M,g,\nabla)$ be a statistical manifold. 
    Consider $\mathrm{id}:(M,g,\nabla)\to(N,\nabla^g)$, the identity map. Then, we have
    \begin{equation*}
        \tau(\mathrm{id})=-T.
    \end{equation*}

\end{example}

Let $(M,g,\nabla^M)$ and $(N,h,\nabla^N)$ be statistical manifolds and $\overline{\nabla}^{\substack{\scalebox{0.3}{\phantom{i}}\\M}}$ and $ \overline{\nabla}^{\substack{\scalebox{0.3}{\phantom{i}}\\N}}$ their conjugate connections, respectively. 
For a mapping $u:M\to N$, denote by $\tau(u)=\tau^{(g,\nabla^M,\nabla^N)}(u)$ and $\overline{\tau}(u)=\tau^{(g,\overline{\nabla}^{\substack{\scalebox{0.01}{\phantom{i}}\\\scalebox{0.5}{$M$}}},\overline{\nabla}^{\substack{\scalebox{0.01}{\phantom{i}}\\\scalebox{0.5}{$N$}}})}(u)$. 
We also denote by $\widehat{\tau}(u)=\tau^{(g,\nabla^g,\nabla^h)}(u)$, which is the usual tension field for mappings between Riemannian manifolds. 
A mapping $u:(M,g)\to(N,h)$ satisfying $\widehat{\tau}(u)=0$ is called a \textit{harmonic map}. 
For a mapping $u:(M,g,\nabla^M)\to(N,\nabla^N)$, remark that the equation $\tau(u)=0$ only defines a class of mapping between $(M,g,\nabla^M)$ and $(N,\nabla^N)$, not between two statistical manifolds.

\begin{example}
\label{id2}
    Consider the identity map $\mathrm{id}:(M,g,\nabla)\to(M,\nabla^g)$ in Example \ref{id}. 
    The conjugate connection of $(M,g,\nabla^g)$ is $\nabla^g$, thus the relation $\overline{\tau}(\mathrm{id})=\tau^{(g,\overline{\nabla},\nabla^g)}(\mathrm{id})=T=-\tau(\mathrm{id})$ holds.
    It is clear that $\widehat{\tau}(\mathrm{id})=\tau^{(g,\nabla^g,\nabla^g)}(\mathrm{id})=0$.
\end{example}

The following proposition holds from the definition of $\tau(u)$, $\overline{\tau}(u)$, $\widehat{\tau}(u)$, and equation (\ref{Leviconj}).

\begin{proposition}
\label{difftension}
    Let $(M,g,\nabla^M), (N,h,\nabla^N)$ be statistical manifolds. 
    For a mapping $u:M \to N$, we have
    \begin{equation*}
\widehat{\tau}(u)=\frac{\tau(u)+\overline{\tau}(u)}{2}.
    \end{equation*}
\end{proposition}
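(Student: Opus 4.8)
The plan is to reduce everything to equation (\ref{Leviconj}), which says that on any statistical manifold the Levi-Civita connection is the arithmetic mean of the affine connection and its conjugate. I would apply this identity twice: once to the source connections on $M$, giving $\nabla^g_X Y = \tfrac{1}{2}\bigl(\nabla^M_X Y + \overline{\nabla}^M_X Y\bigr)$, and once to the target connections on $N$, giving $\nabla^h = \tfrac{1}{2}\bigl(\nabla^N + \overline{\nabla}^N\bigr)$. Since $\widehat{\tau}(u)$, $\tau(u)$, and $\overline{\tau}(u)$ are built from exactly these three pairs of connections, the claimed averaging should fall out by pure linearity.

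First I would observe that the averaging relation on $N$ descends to the induced connections on the pullback bundle $u^{-1}TN$. By definition $\widehat{\nabla}^u_X U = \nabla^h_{u_*X} U$, $\nabla^u_X U = \nabla^N_{u_*X} U$, and $\overline{\nabla}^u_X U = \overline{\nabla}^N_{u_*X} U$ for $X \in \Gamma(TM)$, $U \in \Gamma(u^{-1}TN)$. Because the pullback construction is $\mathbb{R}$-linear in the connection it is induced from, applying (\ref{Leviconj}) on $N$ yields $\widehat{\nabla}^u_X U = \tfrac{1}{2}\bigl(\nabla^u_X U + \overline{\nabla}^u_X U\bigr)$.

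Next I would substitute both averaging relations into the defining expression for $\widehat{\tau}(u)$. The integrand $\widehat{\nabla}^u_X u_* Y - u_* \nabla^g_X Y$ then rewrites as $\tfrac{1}{2}\bigl(\nabla^u_X u_* Y - u_*\nabla^M_X Y\bigr) + \tfrac{1}{2}\bigl(\overline{\nabla}^u_X u_* Y - u_*\overline{\nabla}^M_X Y\bigr)$, where I have used that $u_*$ is linear. Applying $\mathrm{tr}_g$, which is likewise linear, and recognizing the two resulting traces as $\tau(u) = \tau^{(g,\nabla^M,\nabla^N)}(u)$ and $\overline{\tau}(u) = \tau^{(g,\overline{\nabla}^M,\overline{\nabla}^N)}(u)$ respectively, produces $\widehat{\tau}(u) = \tfrac{1}{2}\bigl(\tau(u) + \overline{\tau}(u)\bigr)$, as desired.

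There is no genuine obstacle here; the only point worth checking carefully is the step asserting that the averaging at the level of $\nabla^N, \overline{\nabla}^N$ really descends to $\nabla^u, \overline{\nabla}^u$, i.e. that the induced-connection operation is linear in its defining connection. This is immediate from the formula $\nabla^u_X U = \nabla^N_{u_*X} U$, so the proof is entirely formal once (\ref{Leviconj}) is invoked on both manifolds.
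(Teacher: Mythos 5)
Your argument is correct and is exactly the one the paper intends: the paper gives no written proof, stating only that the proposition ``holds from the definition of $\tau(u)$, $\overline{\tau}(u)$, $\widehat{\tau}(u)$, and equation (\ref{Leviconj})'', and your write-up simply fleshes out that same linearity argument, including the one point genuinely worth checking (that averaging of connections on $N$ descends to the induced connections on $u^{-1}TN$).
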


\vspace{0.5\baselineskip}

Now, between statistical manifolds $(M,g,\nabla^M)$ and $(N,h,\nabla^N)$, we will derive a class of mappings from a variational principle. 

\begin{definition}
Let $M$ be compact. The statistical bi-energy of $u:(M,g,\nabla^M)\to (N,h,\nabla^N)$ is defined by
\begin{eqnarray*}
    E_2(u)
    &=&E_2^{(g, \nabla^M, h, \nabla^N)}(u)  \\
    &=&\frac{1}{2} \int_{M} \| \tau(u) \|_{h}^2 d\mu_g.  \nonumber
\end{eqnarray*}
Here, $\|\cdot\|_h$ is the norm derived by $h$ and $d\mu_g$ is the standard measure derived from the Riemanian metric $g$. 
\end{definition}

The statistical bi-energy functional is defined by using the statistical structures of both the source and the target.
The statistical bi-tension field $\tau_2(u)=\tau_2^{(g,\nabla^M, h,\nabla^N)}(u)$ of $u$ is defined by
\begin{eqnarray}
\tau_2(u)
&=&        
\bar{\Delta}^u\tau(u)+\operatorname{div}^g
(\mathrm{tr}_g K^M)\tau(u) \nonumber\\
&&\quad 
-\sum_{i=1}^mL^N(u_*e_i,\tau(u)) u_*e_i -K^N(\tau(u),\tau(u) )
\quad 
\in \Gamma(u^{-1}TN),      \nonumber
\end{eqnarray}
where $\{e_i\}_{i=1, \ldots,m}$ is an orthonormal frame with respect to $g$.

The next theorem is proved in \cite{FU}.
\begin{theorem}
For any variations $F=\{u_t\}_{t\in(-\epsilon,\epsilon)}$ of $u$, we have
\begin{equation*}
  \left.\displaystyle\frac{d}{dt}\right|_{t=0}E_2(u_t)
    =\int_{M}\left\langle V,\tau_2(u)\right\rangle d\mu_g.
\end{equation*}
Here, $V\in\Gamma(u^{-1}TN)$ is the variational vector field generated by $F$.
\end{theorem}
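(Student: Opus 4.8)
The plan is to run the classical first-variation argument, the three statistical novelties being the appearance of the conjugate connection on the target, the curvature-interchange tensor, and the non-metric base connection on the source. I would fix a variation $F\colon M\times(-\epsilon,\epsilon)\to N$ with $F(\cdot,t)=u_t$ and $V=F_*\partial_t|_{t=0}$, and work on the pullback bundle $F^{-1}TN$ with the connections $\nabla^F,\overline{\nabla}^F$ induced from $\nabla^N,\overline{\nabla}^N$; these are conjugate with respect to the pulled-back metric $h$. Differentiating under the integral and using that $\|\cdot\|_h$ is metric gives $\frac{d}{dt}E_2(u_t)=\int_M h(\widehat{\nabla}^F_{\partial_t}\tau(u_t),\tau(u_t))\,d\mu_g$ with $\widehat{\nabla}^F=\tfrac12(\nabla^F+\overline{\nabla}^F)$; by (\ref{Kconjg}) and (\ref{Leviconj}) this equals $h(\nabla^F_{\partial_t}\tau-K^N(F_*\partial_t,\tau),\tau)$, so at $t=0$ the integrand is $h(\nabla^F_{\partial_t}\tau|_{0},\tau)-h(K^N(V,\tau),\tau)$.

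The next step is to evaluate $\nabla^F_{\partial_t}\tau(u_t)$ at $t=0$. Expanding $\tau$ in a fixed $g$-orthonormal frame $\{e_i\}$ (with $[\partial_t,e_i]=0$), I would commute $\nabla^F_{\partial_t}$ past $\nabla^F_{e_i}$ using torsion-freeness, in the form $\nabla^F_{\partial_t}F_*e_i=\nabla^F_{e_i}F_*\partial_t$ and $\nabla^F_{\partial_t}F_*\nabla^M_{e_i}e_i=\nabla^F_{\nabla^M_{e_i}e_i}F_*\partial_t$, together with the definition of $R^{\nabla^N}$. The result is
\begin{equation*}
\nabla^F_{\partial_t}\tau\big|_{t=0}=\Delta^u V+\sum_{i=1}^m R^{\nabla^N}(V,u_*e_i)u_*e_i,
\end{equation*}
where $\Delta^u$ is the statistical connection Laplacian of Definition \ref{FUdef}; the base correction is $\nabla^M_{e_i}e_i$ precisely because $\tau$ itself is built from $\nabla^M$. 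The curvature term is then rewritten as $-\sum_i h(V,L^N(u_*e_i,\tau)u_*e_i)$ using the antisymmetry of $R^{\nabla^N}$ in its first two slots and the defining relation $g(L(Z,W)X,Y)=g(R(X,Y)Z,W)$, while the $K^N$ term becomes $-h(V,K^N(\tau,\tau))$ by the total symmetry of $g(K^N_XY,Z)$ recorded in Example \ref{eqs}$(2)$. These already reproduce two of the four terms of $\tau_2(u)$.

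The remaining second-order contribution $\int_M h(\Delta^u V,\tau)\,d\mu_g$ must be integrated by parts, and I expect this to be the main obstacle: it requires a Green-type identity for the statistical connection Laplacian over the closed manifold $M$. The plan is to introduce the vector fields $Z_1,Z_2$ determined by $g(Z_1,X)=h(\nabla^u_X V,\tau)$ and $g(Z_2,X)=h(V,\overline{\nabla}^u_X\tau)$, expand $\mathrm{div}^g Z_1$ and $\mathrm{div}^g Z_2$ using the conjugacy relation $Xh(\xi,\eta)=h(\nabla^u_X\xi,\eta)+h(\xi,\overline{\nabla}^u_X\eta)$, and integrate using $\int_M\mathrm{div}^g(\,\cdot\,)\,d\mu_g=0$. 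Transferring $\Delta^u$ from $V$ onto $\tau$ swaps the fiber connection $\nabla^u$ for its conjugate $\overline{\nabla}^u$, which is what produces $\overline{\Delta}^u\tau$; the discrepancy between $\nabla^M$ (hidden in $\Delta^u$) and $\nabla^g$ (hidden in $\mathrm{div}^g$ and in $d\mu_g$) is governed by $T^M=\mathrm{tr}_gK^M$ through the relation $\mathrm{div}^{\nabla^M}W=\mathrm{div}^g W+g(T^M,W)$, and this is the source of the $\mathrm{div}^g(\mathrm{tr}_gK^M)\tau$ term.

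The delicate part is the careful accounting of the first-order Tchebychev contributions so that they assemble exactly into $\mathrm{div}^g(\mathrm{tr}_gK^M)\tau$; this is where the non-parallelism of the volume form $d\mu_g$ with respect to $\nabla^M$ (Remark \ref{symric}$(2)$) enters decisively, and it is the only place that distinguishes the computation from the familiar Riemannian one, in which $T^M=0$ and $\overline{\nabla}^u=\nabla^u$ collapse everything to the classical Jiang formula. Once this bookkeeping is completed, summing the four pieces yields $\int_M h(V,\tau_2(u))\,d\mu_g$, and since the variation field $V$ is arbitrary, the stated first-variation formula follows.
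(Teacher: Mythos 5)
The paper does not actually prove this theorem; it only cites \cite{FU} for the proof, so there is no in-text argument to compare against. That said, your proposal is the standard first-variation computation that such a formula requires, and every step checks out: the splitting $\widehat{\nabla}^F=\tfrac12(\nabla^F+\overline{\nabla}^F)$ produces the $-K^N(\tau,\tau)$ term via total symmetry of $h(K^N_XY,Z)$, the commutation of $\nabla^F_{\partial_t}$ with $\nabla^F_{e_i}$ gives $\Delta^uV+\sum_iR^N(V,u_*e_i)u_*e_i$ whose curvature part dualizes to $-\sum_iL^N(u_*e_i,\tau)u_*e_i$ exactly as you say, and your integration by parts via $Z_1,Z_2$ works: one finds $\operatorname{div}^gZ_1-\operatorname{div}^gZ_2=h(\Delta^uV,\tau)-h(V,\overline{\Delta}^u\tau)+T^M\bigl(h(V,\tau)\bigr)$ (with $\overline{\Delta}^u$ taken with base connection $\overline{\nabla}^M$, which is the paper's convention), and one further divergence, $\operatorname{div}^g\bigl(h(V,\tau)T^M\bigr)$, converts the last term into $\operatorname{div}^g(\operatorname{tr}_gK^M)\,h(V,\tau)$. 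The only point I would make explicit is that third divergence term, which you describe somewhat loosely in terms of the non-parallel volume form; otherwise the outline is complete and correct.
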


\vspace{0.5\baselineskip}

Let $(M,g,\nabla^M), (N,h,\nabla^N)$ be statistical manifolds and $\overline{\nabla}^{\substack{\scalebox{0.3}{\phantom{i}}\\M}}$, $ \overline{\nabla}^{\substack{\scalebox{0.3}{\phantom{i}}\\N}}$ their conjugate connections, respectively. For a mapping $u: M\to N$, we denote $\overline{\tau}_2(u)=\tau^{(g,\overline{\nabla}^{\substack{\scalebox{0.01}{\phantom{i}}\\ \scalebox{0.55}{$M$}}},h,\overline{\nabla}^{\substack{\scalebox{0.01}{\phantom{i}}\\ \scalebox{0.55}{$N$}}})}_2(u)$.

\begin{proposition}
\label{diffbitensionpr}
     Let $u: (M,g, \nabla^M) \to (N, h, \nabla^N)$ be a smooth map between statistical manifolds. Then, we have 
     \begin{equation}
     \label{diffbitension}
     \begin{split}
         \tau_2(u)-\overline{\tau}_2(u)&=2\left(\widehat{\Delta}^u\tau(u)-\sum_{i=1}^mR^h(u_*e_i,\tau(u))u_*e_i\right)\\
         &+2K^N(\tau(u),\widehat{\tau}(u))-4K^N(\widehat{\tau}(u),\widehat{\tau}(u))\\
         &-2\left(\Delta^u\widehat{\tau}(u)-\operatorname{div}^g(\mathrm{tr}_gK^M)\widehat{\tau}(u)-\sum_{i=1}^m\overline{L}^N(u_*e_i,\widehat{\tau}(u))u_*e_i\right).
     \end{split}
     \end{equation}
     In particular, if $u:(M,g)\to(N,h)$ is a harmonic map, that is, $\widehat{\tau}(u)=0$, then we have
     \begin{equation*}
         \tau_2(u)-\overline{\tau}_2(u)=2\left(\widehat{\Delta}^u\tau(u)-\sum_{i=1}^mR^h(u_*e_i,\tau(u))u_*e_i\right).
     \end{equation*}
\end{proposition}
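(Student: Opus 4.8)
The plan is to write out $\tau_2(u)$ and $\overline{\tau}_2(u)$ from the definition of the statistical bi-tension field and subtract them, organizing the difference into four groups: the Laplacian terms, the divergence terms, the curvature-interchange terms, and the quadratic $K^N$-terms. To form $\overline{\tau}_2(u)$ one substitutes the conjugate data throughout, using that conjugation is involutive on the target (so the conjugate bundle Laplacian reverts to the $\nabla^u$-Laplacian), that $\mathrm{tr}_g\overline{K}^M=-\mathrm{tr}_g K^M$ and $\overline{K}^N=-K^N$ (both from (\ref{Kconjg})), and that $L^{(h,\overline{\nabla}^{N})}=\overline{L}^N$. Two of the four groups are then immediate. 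The divergence terms combine to $2\,\mathrm{div}^g(\mathrm{tr}_g K^M)\widehat{\tau}(u)$ by means of $\tau(u)+\overline{\tau}(u)=2\widehat{\tau}(u)$ (Proposition \ref{difftension}), and the quadratic terms $-K^N(\tau(u),\tau(u))-\overline{K}^N(\overline{\tau}(u),\overline{\tau}(u))$ are rearranged using the symmetry of $K^N$ together with the same relation; these feed into the $K^N(\tau(u),\widehat{\tau}(u))$ and $K^N(\widehat{\tau}(u),\widehat{\tau}(u))$ terms of (\ref{diffbitension}).

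The core of the argument is the difference of the two statistical connection Laplacians. They differ both in source connection ($\nabla^M$ against $\overline{\nabla}^M$) and in bundle connection ($\overline{\nabla}^u$ against $\nabla^u$), so I would first bring them to a common form. On the source side I use $\overline{\nabla}^M_XY=\nabla^M_XY-2K^M(X,Y)$, which converts the $\overline{\nabla}^M$-Laplacian to the $\nabla^M$-Laplacian up to a correction traced against $\mathrm{tr}_g K^M$. On the bundle side I pass to the Levi-Civita reference $\widehat{\nabla}^u$ via $\nabla^u=\widehat{\nabla}^u+\kappa$ and $\overline{\nabla}^u=\widehat{\nabla}^u-\kappa$, where $\kappa_X=K^N(u_*X,\cdot)$ (this decomposition is exactly (\ref{Leviconj}) transported to $u^{-1}TN$), and expand each Laplacian by the change-of-connection formula as $\widehat{\Delta}^u$ plus correction terms that are, respectively, linear and quadratic in $\kappa$.

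The simplification I expect here is a parity cancellation. After the difference of the bundle Laplacians is written as $\widehat{\Delta}^u\tau(u)-\widehat{\Delta}^u\overline{\tau}(u)$ plus the two change-of-connection corrections, rewriting $\widehat{\Delta}^u\overline{\tau}(u)$ through $2\widehat{\tau}(u)=\tau(u)+\overline{\tau}(u)$ and converting $\widehat{\Delta}^u\widehat{\tau}(u)$ back to $\Delta^u\widehat{\tau}(u)$ collects all corrections into the single combination $2C[\kappa]\widehat{\tau}(u)+C[-\kappa]\tau(u)-C[\kappa]\overline{\tau}(u)$, where $C[\cdot]$ denotes the change-of-connection operator. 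Since $C[\cdot]$ splits into a part odd in its argument (linear in $\kappa$, of first order) and a part even in its argument (quadratic in $\kappa$, algebraic), and since $2\widehat{\tau}(u)=\tau(u)+\overline{\tau}(u)$, the odd parts cancel and only $2\sum_iK^N(u_*e_i,K^N(u_*e_i,\tau(u)))$ survives. This produces the clean Laplacian contribution $2\widehat{\Delta}^u\tau(u)-2\Delta^u\widehat{\tau}(u)$ of (\ref{diffbitension}) together with a residual term quadratic in $K^N$. To create the curvature terms $-2\sum_iR^h(u_*e_i,\tau(u))u_*e_i$ and $2\sum_i\overline{L}^N(u_*e_i,\widehat{\tau}(u))u_*e_i$, I then invoke the interchange identities (\ref{intcurvature}) and (\ref{sumcurv}) relating $L^N$, $\overline{L}^N$, $R^N$, $\overline{R}^N$, and the Gauss-type formula expressing $R^h$ through $R^N$, $\overline{R}^N$ and a term quadratic in $K^N$; the latter absorbs the residual quadratic term above.

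I expect the main obstacle to be this final bookkeeping: the quadratic-in-$K^N$ data coming from three places — the surviving Laplacian remainder $\sum_iK^N(u_*e_i,K^N(u_*e_i,\tau(u)))$, the commutator part of $R^h$, and the direct terms $K^N(\tau(u),\tau(u))$ and $K^N(\overline{\tau}(u),\overline{\tau}(u))$ — must be matched simultaneously with the reshuffling of the curvature-interchange tensors, while the first-order contribution from the source realignment has to be reconciled against the divergence group; keeping track of signs and of the symmetrization in $\tau(u)$ and $\widehat{\tau}(u)$ is the delicate point throughout. Once (\ref{diffbitension}) is established, the ``in particular'' statement is immediate: imposing $\widehat{\tau}(u)=0$ annihilates every term containing $\widehat{\tau}(u)$ and leaves exactly $2\bigl(\widehat{\Delta}^u\tau(u)-\sum_iR^h(u_*e_i,\tau(u))u_*e_i\bigr)$.
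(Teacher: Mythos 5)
Your overall architecture coincides with the paper's first half: both proofs write out $\tau_2(u)$ and $\overline{\tau}_2(u)$ from the definition, use $\overline{\tau}(u)=2\widehat{\tau}(u)-\tau(u)$ and $L^N+\overline{L}^N=R^N+\overline{R}^N$ to isolate the divergence and quadratic groups exactly as you describe, and reduce everything to the single combination $\Delta^u\tau(u)+\bar{\Delta}^u\tau(u)-\sum_i(R^N+\overline{R}^N)(u_*e_i,\tau(u))u_*e_i$. Where you genuinely diverge is in evaluating that combination: the paper expands $R^N$ and $\overline{R}^N$ by definition and uses torsion-freeness to collapse the second-order terms into Lie brackets $[u_*e_i,\tau(u)]$, then converts $\nabla^N+\overline{\nabla}^N=2\nabla^h$; you instead expand both Laplacians relative to $\widehat{\nabla}^u$ via the difference tensor $\kappa_X=K^N(u_*X,\cdot)$ and invoke the Gauss-type decomposition $(R^N+\overline{R}^N)(X,Y)=2R^h(X,Y)+2(K^N_XK^N_Y-K^N_YK^N_X)$. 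Your route works — the odd-in-$\kappa$ terms do cancel, and the residual $2\sum_iK^N(u_*e_i,K^N(u_*e_i,\tau(u)))$ is annihilated by the commutator part of the curvature decomposition through the symmetry of $K^N$ — and it has the merit of staying entirely inside $\Gamma(u^{-1}TN)$, avoiding the paper's somewhat formal brackets of pullback sections. Two corrections are needed to close your bookkeeping, though. First, the first-order source correction does not get reconciled against the divergence group: the $\widehat{\nabla}^u_{K^M(e_i,e_i)}$-parts cancel between the two Laplacians (they enter with opposite signs), and what survives is the purely algebraic term $-2K^N(u_*(\mathrm{tr}_gK^M),\tau(u))$. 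Second, that surviving term combines with the piece $+2K^N(\tau(u),\sum_iK^N(u_*e_i,u_*e_i))$ left over from the curvature commutator via the identity $\tau(u)-\widehat{\tau}(u)=\sum_iK^N(u_*e_i,u_*e_i)-u_*(\mathrm{tr}_gK^M)$, producing exactly the $2K^N(\tau(u),\tau(u))-2K^N(\tau(u),\widehat{\tau}(u))$ that must be absorbed into your quadratic group; this identity is the missing ingredient in your plan. With those two points supplied, the matching closes and the ``in particular'' statement follows as you say.
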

\begin{proof}
    The equation
    \begin{eqnarray}
\overline{\tau}_2(u)
&=&        
\Delta^u\overline{\tau}(u)-\operatorname{div}^g
(\mathrm{tr}_g K^M)\overline{\tau}(u) \nonumber\\
&&\quad 
-\sum_{i=1}^m\overline{L}^N(u_*e_i,\overline{\tau}(u)) u_*e_i +K^N(\overline{\tau}(u),\overline{\tau}(u) ) \nonumber
\end{eqnarray}
holds by definition.
    By equation (\ref{sumcurv}) and Proposition \ref{difftension}, we have
    \begin{equation}
    \label{diffbiteneq1}
        \begin{split}
            \tau_2(u)-\overline{\tau}_2(u)&=\Delta^u\tau(u)+\bar{\Delta}^u\tau(u)-\sum_{i=1}^m(R^N+\overline{R}^{\substack{\scalebox{0.3}{\phantom{i}}\\N}})(u_*e_i,\tau(u))u_*e_i\\
            &-2K^N(\tau(u),\tau(u))+4K^N(\tau(u),\widehat{\tau}(u))-4K^N(\widehat{\tau}(u),\widehat{\tau}(u))\\
            &-2\left(\Delta^u\widehat{\tau}(u)-\operatorname{div}^g(\mathrm{tr}_gK^M)\widehat{\tau}(u)-\sum_{i=1}^m\overline{L}^N(u_*e_i,\widehat{\tau}(u))u_*e_i\right).
        \end{split}
    \end{equation}
    Here, since the affine connections are torsion-free, we have 
    
    \begin{equation*}
        \begin{split}
            \Delta^u\tau(u)+\bar{\Delta}^u\tau(u)&-\sum_{i=1}^m(R^N+\overline{R}^{\substack{\scalebox{0.3}{\phantom{i}}\\N}})(u_*e_i,\tau(u))u_*e_i\\
            &=\sum_{i=1}^m\left(\nabla^u_{e_i}\nabla^u_{e_i}-\nabla^u_{\substack{ \scalebox{0.1}{\phantom{i}}\\\nabla^M_{e_i}e_i}}\right)\tau(u)+\sum_{i=1}^m\left(\overline{\nabla}^{\substack{\scalebox{0.45}{\phantom{i}}\\u}}_{e_i}\overline{\nabla}^{\substack{\scalebox{0.45}{\phantom{i}}\\u}}_{e_i}-\overline{\nabla}^{\substack{\scalebox{0.45}{\phantom{i}}\\u}}_{\overline{\nabla}^{\substack{\scalebox{0.1}{\phantom{i}}\\ \scalebox{0.55}{$M$}}}_{e_i}e_i}\right)\tau(u)\\
            &-\sum_{i=1}^m\left(\nabla^N_{u_*e_i}\nabla^N_{\tau(u)}u_*e_i-\nabla^N_{\tau(u)}\nabla^N_{u_*e_i}u_*e_i-\nabla^N_{\lbrack u_*e_i,\tau(u)\rbrack}u_*e_i\right)\\
            &-\sum_{i=1}^m\left(\overline{\nabla}^{\substack{\scalebox{0.3}{\phantom{i}}\\N}}_{u_*e_i}\overline{\nabla}^{\substack{\scalebox{0.3}{\phantom{i}}\\N}}_{\tau(u)}u_*e_i-\overline{\nabla}^{\substack{\scalebox{0.3}{\phantom{i}}\\N}}_{\tau(u)}\overline{\nabla}^{\substack{\scalebox{0.3}{\phantom{i}}\\N}}_{u_*e_i}u_*e_i-\overline{\nabla}^{\substack{\scalebox{0.3}{\phantom{i}}\\N}}_{\lbrack u_*e_i,\tau(u)\rbrack}u_*e_i\right)\\
            &=\sum_{i=1}^m\left(\nabla^N_{u_*e_i}\lbrack u_*e_i,\tau(u)\rbrack+\nabla^N_{\lbrack u_*e_i,\tau(u)\rbrack}u_*e_i\right)\\
            &+\sum_{i=1}^m\left(\overline{\nabla}^{\substack{\scalebox{0.3}{\phantom{i}}\\N}}_{u_*e_i}\lbrack u_*e_i,\tau(u)\rbrack+\overline{\nabla}^{\substack{\scalebox{0.3}{\phantom{i}}\\N}}_{\lbrack u_*e_i,\tau(u)\rbrack}u_*e_i\right)\\
            &+\sum_{i=1}^m\left(\nabla^N_{\tau(u)}\nabla^N_{u_*e_i}u_*e_i-\nabla^u_{\substack{ \scalebox{0.1}{\phantom{i}}\\\nabla^M_{e_i}e_i}}\tau(u)+\overline{\nabla}^{\substack{\scalebox{0.3}{\phantom{i}}\\N}}_{\tau(u)}\overline{\nabla}^{\substack{\scalebox{0.3}{\phantom{i}}\\N}}_{u_*e_i}u_*e_i-\overline{\nabla}^{\substack{\scalebox{0.45}{\phantom{i}}\\u}}_{\overline{\nabla}^{\substack{\scalebox{0.1}{\phantom{i}}\\ \scalebox{0.55}{$M$}}}_{e_i}e_i}\tau(u)\right).
        \end{split}
    \end{equation*}
    In the last summation, from the definition of $\tau(u)$,
    \begin{equation*}
    \begin{split}
        \sum_{i=1}^m\left(\nabla^N_{\tau(u)}\nabla^N_{u_*e_i}u_*e_i-\nabla^u_{\substack{ \scalebox{0.1}{\phantom{i}}\\\nabla^M_{e_i}e_i}}\tau(u)\right)&=\nabla^N_{\tau(u)}\tau(u)+\sum_{i=1}^m\left\lbrack \tau(u), u_*\left(\nabla^M_{e_i}e_i\right)\right\rbrack
    \end{split}
    \end{equation*}
    holds. By equations $(\ref{Kconjg})$ and $(\ref{Leviconj})$, we have
    \begin{equation*}
        \begin{split}
            \Delta^u\tau(u)+\bar{\Delta}^u\tau(u)&-\sum_{i=1}^m(R^N+\overline{R}^{\substack{\scalebox{0.3}{\phantom{i}}\\N}})(u_*e_i,\tau(u))u_*e_i\\
            &=2\sum_{i=1}^m\left(\nabla^h_{u_*e_i}\lbrack u_*e_i,\tau(u)\rbrack+\nabla^h_{\lbrack u_*e_i,\tau(u)\rbrack}u_*e_i\right)\\
            &+\nabla^N_{\tau(u)}\tau(u)+\sum_{i=1}^m\left(\lbrack \tau(u), u_*(\nabla^M_{e_i}e_i)\rbrack\right)\\
            &+\overline{\nabla}^{\substack{\scalebox{0.3}{\phantom{i}}\\N}}_{\tau(u)}\overline{\tau}(u)+\sum_{i=1}^m(\lbrack \tau(u), u_*(\overline{\nabla}^{\substack{\scalebox{0.3}{\phantom{i}}\\M}}_{e_i}e_i)\rbrack)\\
            &=2\sum_{i=1}^m\left(\nabla^h_{u_*e_i}\lbrack u_*e_i,\tau(u)\rbrack+\nabla^h_{\lbrack u_*e_i,\tau(u)\rbrack}u_*e_i+\left\lbrack \tau(u), u_*\left(\nabla^g_{e_i}e_i\right)\right\rbrack\right)\\
            &+2K^N(\tau(u),\tau(u))+2\overline{\nabla}^{\substack{\scalebox{0.3}{\phantom{i}}\\N}}_{\tau(u)}\widehat{\tau}(u).
        \end{split}
    \end{equation*}
    In the summation, since $\nabla^g,\nabla^h$ are torsion-free, we have
    \begin{equation*}
        \begin{split}
            &2\sum_{i=1}^m\left(\nabla^h_{u_*e_i}\lbrack u_*e_i,\tau(u)\rbrack+\nabla^h_{\lbrack u_*e_i,\tau(u)\rbrack}u_*e_i+\left\lbrack \tau(u), u_*\left(\nabla^g_{e_i}e_i\right)\right\rbrack\right)\\
            &=2\sum_{i=1}^m\left(\nabla^h_{u_*e_i}\nabla^h_{u_*e_i}\tau(u)-\nabla^h_{u_*e_i}\nabla^h_{\tau(u)}u_*e_i+\nabla^h_{\lbrack u_*e_i,\tau(u)\rbrack}u_*e_i\right)\\
            &+2\sum_{i=1}^m\left(\nabla^h_{\tau(u)}u_*(\nabla^g_{e_i}e_i)-\nabla^h_{u_*(\nabla^g_{e_i}e_i)}\tau(u)\right)\\
            &=2\widehat{\Delta}^u\tau(u)-2\sum_{i=1}^mR^h\left(u_*e_i,\tau(u)\right)u_*e_i-2\nabla^h_{\tau(u)}\widehat{\tau}(u),
        \end{split}
    \end{equation*}
    thus
    \begin{equation}
    \label{lasteq}
        \begin{split}       \Delta^u\tau&(u)+\bar{\Delta}^u\tau(u)-\sum_{i=1}^m(R^N+\overline{R}^{\substack{\scalebox{0.3}{\phantom{i}}\\N}})(u_*e_i,\tau(u))u_*e_i\\
        &=2\widehat{\Delta}^u\tau(u)-2\sum_{i=1}^mR^h\left(u_*e_i,\tau(u)\right)u_*e_i+2K^N(\tau(u),\tau(u))-2K^N(\tau(u),\widehat{\tau}(u)).
        \end{split}
    \end{equation}
    By substituting (\ref{lasteq}) in equation (\ref{diffbiteneq1}), equation (\ref{diffbitension}) follows.
    
\end{proof}

\section{Statistical manifolds with statistical biharmonic identity maps}
\subsection{Statistical biharmonicity of identity maps}
As we have seen in Examples \ref{id} and \ref{id2}, for a statistical manifold $(M,g,\nabla)$ the identity map $\mathrm{id}:M\to M$ satisfies $\tau(\mathrm{id})=\tau^{(g,\nabla,\nabla^g)}(\mathrm{id})=0$ and $\overline{\tau}(\mathrm{id})=\tau^{(g,\overline{\nabla},\nabla^g)}=0$ if and only if the statistical manifold $(M,g,\nabla)$ satisfies the equiaffine condition.
\begin{theorem}
\label{main1}
    Let $(M,g,\nabla)$ be a statistical manifold, $\mathrm{id}:(M,g,\nabla)\to (M,g,\nabla^g)$ the identity map. 
    Denote $\tau_2(\mathrm{id})=\tau_2^{(g,\nabla,g,\nabla^g)}(\mathrm{id})$ and $\overline{\tau}_2(\mathrm{id})=\tau_2^{(g,\overline{\nabla},g,\nabla^g)}(\mathrm{id})$. 
    The equation $\tau_2(\mathrm{id})=\overline{\tau}_2(\mathrm{id})=0$ holds if and only if the equations $\operatorname{(T1)}$ and $\operatorname{(T2)}$ hold for the Tchebychev vector field.
\end{theorem}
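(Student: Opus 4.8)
The plan is to exploit the linear decomposition: $\tau_2(\mathrm{id})=\overline{\tau}_2(\mathrm{id})=0$ holds if and only if both $\tau_2(\mathrm{id})-\overline{\tau}_2(\mathrm{id})=0$ and $\tau_2(\mathrm{id})+\overline{\tau}_2(\mathrm{id})=0$ hold, and to show that the difference encodes exactly (T1) while the sum encodes exactly (T2). First I would record the data specific to the identity map: by Examples \ref{id} and \ref{id2} we have $\tau(\mathrm{id})=-T$, $\overline{\tau}(\mathrm{id})=T$ and $\widehat{\tau}(\mathrm{id})=0$; since the target carries the Levi-Civita connection $\nabla^g$, which is its own conjugate, the difference tensor $K^N$ of the target vanishes and all three connections induced on $\mathrm{id}^{-1}TM=TM$ coincide with $\nabla^g$. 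Consequently the statistical connection Laplacians $\Delta^{\mathrm{id}}$, $\bar{\Delta}^{\mathrm{id}}$, $\widehat{\Delta}^{\mathrm{id}}$ differ only through their source connection ($\nabla$, $\overline{\nabla}$ or $\nabla^g$), a reduction that makes every term below computable.

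For the difference I would invoke Proposition \ref{diffbitensionpr} in its harmonic form: because $\widehat{\tau}(\mathrm{id})=0$,
\[
\tau_2(\mathrm{id})-\overline{\tau}_2(\mathrm{id})=2\Bigl(\widehat{\Delta}^{\mathrm{id}}\tau(\mathrm{id})-\sum_{i=1}^m R^g(e_i,\tau(\mathrm{id}))e_i\Bigr).
\]
Here $\widehat{\Delta}^{\mathrm{id}}$ is the genuine rough Laplacian $\Delta_g$ (both source and bundle connections being $\nabla^g$), so $\widehat{\Delta}^{\mathrm{id}}\tau(\mathrm{id})=-\Delta_g T$. The symmetries of the Riemannian curvature tensor yield the contraction identity $\sum_i R^g(e_i,T)e_i=-\sum_i\operatorname{Ric}^g(e_i,T)e_i$, so $-\sum_i R^g(e_i,\tau(\mathrm{id}))e_i=-\sum_i\operatorname{Ric}^g(e_i,T)e_i$. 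Thus the difference equals $-2\bigl(\Delta_g T+\sum_i\operatorname{Ric}^g(e_i,T)e_i\bigr)$, and it vanishes precisely when (T1) holds.

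For the sum I would substitute directly into the definitions of $\tau_2(\mathrm{id})$ and $\overline{\tau}_2(\mathrm{id})$. The only genuinely new ingredient is the comparison of a source-twisted Laplacian with the rough one: from $K(X,Y)=\nabla_XY-\nabla^g_XY$ together with $\mathrm{tr}_gK=T$ one obtains $\bar{\Delta}^{\mathrm{id}}\xi=\Delta_g\xi-\nabla^g_T\xi$ for the source $\nabla$, and the opposite correction $+\nabla^g_T\xi$ for the source $\overline{\nabla}$, since $\nabla^g_{e_i}e_i-\overline{\nabla}_{e_i}e_i=K(e_i,e_i)$ traces to $T$ by equation (\ref{Kconjg}). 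Using $\tau(\mathrm{id})=-T$, $\overline{\tau}(\mathrm{id})=T$, the source difference tensors with traces $T$ and $-T$ respectively, $L^N=R^g$ (again from the Riemannian curvature symmetries) and $K^N=0$, the $\Delta_g T$ contributions and the curvature contributions cancel in $\tau_2(\mathrm{id})+\overline{\tau}_2(\mathrm{id})$, leaving a scalar multiple of the combination $\operatorname{div}^g(T)T+\nabla^g_TT$ appearing in (T2). Hence the sum vanishes precisely when (T2) holds.

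Combining the two computations, $\tau_2(\mathrm{id})=\overline{\tau}_2(\mathrm{id})=0$ is equivalent to the simultaneous vanishing of the difference and the sum, that is, to (T1) and (T2) holding together. The main obstacle I anticipate is purely bookkeeping: keeping the three source connections and the (here coincident) bundle connections straight inside each statistical connection Laplacian, and tracking every sign through the divergence term and the curvature term. The two tensorial identities $L^N=R^g$ and $\sum_i R^g(e_i,T)e_i=-\sum_i\operatorname{Ric}^g(e_i,T)e_i$ are the only non-mechanical inputs, and once they are in place each half of the argument collapses to a single clean expression.
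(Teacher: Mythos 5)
Your overall strategy is exactly the paper's: split $\tau_2(\mathrm{id})=\overline{\tau}_2(\mathrm{id})=0$ into the vanishing of the difference and of the sum, handle the difference via Proposition \ref{diffbitensionpr} (using $\widehat{\tau}(\mathrm{id})=0$) together with the contraction identity $\sum_i R^g(e_i,T)e_i=-\sum_i\operatorname{Ric}^g(e_i,T)e_i$, and handle the sum by direct substitution into the definitions with $K^N=0$, $L^N=R^g$ and the traces $\pm T$ of the two source difference tensors. The difference half of your argument is correct and complete as written.

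The sum half, however, contains a sign error that would make the computation fail. The formula you assert, $\bar{\Delta}^{\mathrm{id}}\xi=\Delta_g\xi-\nabla^g_T\xi$ for the source connection $\nabla$ (and $+\nabla^g_T\xi$ for $\overline{\nabla}$), is inconsistent with the identity you cite to justify it: in the convention actually used in the paper (visible in the proof of Proposition \ref{diffbitensionpr}), the Laplacian $\bar{\Delta}^u$ entering $\tau_2^{(g,\nabla,g,\nabla^g)}(\mathrm{id})$ carries the \emph{conjugate} source connection, i.e.\ its second term is $-\nabla^g_{\overline{\nabla}_{e_i}e_i}\xi$, and since $\overline{\nabla}_{e_i}e_i=\nabla^g_{e_i}e_i-K(e_i,e_i)$ this traces to $\bar{\Delta}^{\mathrm{id}}\xi=\Delta_g\xi+\nabla^g_T\xi$, with the opposite correction $-\nabla^g_T\xi$ for the source $\overline{\nabla}$. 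The signs matter: with your version the Laplacian contributions to $\tau_2(\mathrm{id})+\overline{\tau}_2(\mathrm{id})$ are $-(\Delta_gT-\nabla^g_TT)+(\Delta_gT+\nabla^g_TT)=+2\nabla^g_TT$, so the sum becomes $2\bigl(\nabla^g_TT-\operatorname{div}^g(T)T\bigr)$, which is \emph{not} a scalar multiple of $\operatorname{div}^g(T)T+\nabla^g_TT$; the equivalence with (T2) is then simply asserted rather than obtained. With the corrected signs one gets $\tau_2(\mathrm{id})+\overline{\tau}_2(\mathrm{id})=-2\bigl(\operatorname{div}^g(T)T+\nabla^g_TT\bigr)$ as in the paper. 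So the fix is local, but as written the step you yourself identify as the ``only genuinely new ingredient'' of the sum computation does not support your conclusion.
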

\begin{proof}
    The equation $\tau_2(\mathrm{id})=\overline{\tau}_2(\mathrm{id})=0$ is equivalent to the following two equations to hold$:$\\
    \centerline{1. \ 
    $\tau_2(\mathrm{id})-\overline{\tau}_2(\mathrm{id})=0$,}

    \centerline{2. \
    $\tau_2(\mathrm{id})+\overline{\tau}_2(\mathrm{id})=0$.}
Here, equation $1.$ is equivalent to equation (\ref{eq1}) by Proposition \ref{diffbitensionpr} and the definition of $\operatorname{Ric}^g$, since $\mathrm{id}:(M,g)\to(M,g)$ is a harmonic map. 

As for equation $2.$, since 
\begin{equation*}
\begin{split}
    \tau_2(\mathrm{id})&=-\sum_{i=1}^m\left(\nabla^g_{e_i}\nabla^g_{e_i}-\nabla^g_{\overline{\nabla}_{e_i}e_i}\right)T-\operatorname{div}^g(T)T+\sum_{i=1}^mR^g(e_i,T)e_i,\\
    \overline{\tau}_2(\mathrm{id})&=\sum_{i=1}^m\left(\nabla^g_{e_i}\nabla^g_{e_i}-\nabla^g_{\nabla_{e_i}e_i}\right)T-\operatorname{div}^g(T)T-\sum_{i=1}^mR^g(e_i,T)e_i,
\end{split}
\end{equation*}
and from the definition of $T$, we have
\begin{equation*}
    -\frac{1}{2}(\tau_2(\mathrm{id})+\overline{\tau}_2(\mathrm{id}))=\operatorname{div}^g(T)T+\nabla^g_T T.
\end{equation*}
\end{proof}

\vspace{0.5\baselineskip}

\begin{definition}
    Let $(M,g,\nabla)$ be a statistical manifold. 
    If the equations (\ref{eq1}) and (\ref{eq2}) are satisfied on $M$, then $(M,g,\nabla)$ is said to \textit{satisfy the semi-equiaffine condition}.
\end{definition}

\begin{remark}
    Generally, a vector field $V$ satisfying the following equation on a Riemannian manifold $(M,g)$ is called a \textit{geodesic vector field}, in the sense of K. Yano \cite{MR124854}$:$
    \begin{equation}
        \label{geovec1}
        \Delta_g V + \sum_{i=1}^m\mathrm{Ric}^g(e_i,V)e_i=0.
    \end{equation}
\end{remark}

\begin{remark}
    A vector field $V$ on a Riemannian manifold $(M,g)$ is also called a \textit{geodesic vector field} if it satifies the following equation, but in the sense of S. Deshmukh, P. Peska and N. Bin Turki \cite{DPN}.
    \begin{equation}
        \label{geovec2}
        \nabla^g_V V = \rho V.
    \end{equation}
    Here, $\rho$ is a smooth function on $M$, called the potential of $V$.
    In this sense, geodesic vector fields are vector fields whose integral curves are reparameterized geodesics.
\end{remark}

\subsection{The semi-equiaffine condition}
We observe examples and properties of statistical manifolds that satisfy the semi-equiaffine condition.
\begin{proposition}
    Let $(M,g,\nabla)$ be a statistical manifold with symmetric $\operatorname{Ric}$ and $g$ is an Einstein metric of non-zero Einstein constant, that is, $\operatorname{Ric}^g=c\cdot g$ holds with $c\neq0$. 
    Assume that the equation $\operatorname{(T1)}$ is satisfied on $M$. Then, there exists a $\nabla$-parallel volume form $\theta$ on $M$. 
\end{proposition}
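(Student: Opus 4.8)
The plan is to show that the one-form $g(T,\cdot)$ is exact with an explicit globally defined potential, after which the conclusion drops out of Remark \ref{symric}(2). First I would record that, since $\operatorname{Ric}$ is symmetric, the Tchebychev vector field $T$ satisfies equation (\ref{symvec}) by Remark \ref{symric}(1); this makes the formula (\ref{intchangelap}) available with $V=T$. Next I would feed the Einstein condition $\operatorname{Ric}^g=c\cdot g$ into (T1): since $\{e_i\}$ is $g$-orthonormal, $\sum_{i=1}^m\operatorname{Ric}^g(e_i,T)e_i=c\sum_{i=1}^m g(e_i,T)e_i=cT$, so (T1) collapses to the simple relation $\Delta_g T=-cT$.

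The key step is then to substitute this into (\ref{intchangelap}), using the Einstein condition a second time in the form $\operatorname{Ric}^g(T,X)=c\,g(T,X)$. This yields, for every $X\in\Gamma(TM)$,
\[
X\operatorname{div}^g(T)=g(\Delta_g T,X)-\operatorname{Ric}^g(T,X)=-c\,g(T,X)-c\,g(T,X)=-2c\,g(T,X).
\]
Because $c\neq 0$, I may define $\varphi:=-\tfrac{1}{2c}\operatorname{div}^g(T)$, a smooth function defined on all of $M$, and the displayed identity says precisely that $g(T,X)=d\varphi(X)$ for all $X\in\Gamma(TM)$. This is exactly the hypothesis of Remark \ref{symric}(2), which then guarantees that the volume form $\theta=e^{\varphi}\omega_g$ is $\nabla$-parallel, i.e. $\nabla\theta=0$, completing the argument.

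The proof is essentially a two-line computation once the right ingredients are identified, so there is no serious obstacle; the only points requiring care are that the Einstein condition is used twice (to simplify both the curvature term in (T1) and the Ricci term in (\ref{intchangelap})) and that the nonvanishing of $c$ is what allows one to solve for the potential $\varphi$. I would emphasize that, because $\operatorname{div}^g(T)$ is globally defined, the potential $\varphi$ is global and no de Rham cohomological obstruction to exactness arises, so the resulting $\nabla$-parallel volume form exists globally rather than merely locally.
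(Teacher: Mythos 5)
Your proposal is correct and follows essentially the same route as the paper's own proof: both apply (\ref{intchangelap}) with $V=T$ (justified by the symmetry of $\operatorname{Ric}$), use the Einstein condition together with (\ref{eq1}) to obtain $X\operatorname{div}^g(T)=-2c\,g(T,X)$, and then set $\varphi=-\tfrac{1}{2c}\operatorname{div}^g(T)$ so that Remark \ref{symric} yields the $\nabla$-parallel volume form $\theta=e^{\varphi}\omega_g$. Your additional remarks on the global definition of $\varphi$ and the double use of the Einstein condition are accurate but not needed beyond what the paper records.
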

\begin{proof}
From equation $(\ref{intchangelap})$ and that $\operatorname{Ric}$ is symmetric, we have
$$
X\operatorname{div}^g(T)=g(\Delta_gT,X)-\operatorname{Ric}^g(T,X),\quad X\in\Gamma(TM).
$$
Here, from equation (\ref{eq1}) and that $\operatorname{Ric}^g=c\cdot g$, we obtain
\begin{equation}
    \label{einmet}
    X\operatorname{div}^g(T)=-2c\cdot g(T,X),\quad X\in\Gamma(TM).
\end{equation}
Therefore, if we let $\varphi=-\frac{1}{2c}\operatorname{div}^g(T)$, we have 
\begin{equation*}
    g(T,X)=d\varphi (X),\quad X\in\Gamma(TM).
\end{equation*}
By Remark \ref{symric}, $\theta=e^{\varphi}\cdot\omega_g$ is a $\nabla$-parallel volume form.
\end{proof}

\vspace{0.5\baselineskip}

\begin{proposition}
    \label{parT}
    Let $(M,g,\nabla)$ be a statistical manifold.
    \renewcommand{\labelenumi}{$(\theenumi)$}
    \begin{enumerate}
        \item If $\nabla^gT=0$ on $M$, then $(M,g,\nabla)$ satisfies the semi-equiaffine condition. \label{parT1}
        \item Assume that $(M,g,\nabla)$ satisfies the semi-equiaffine condition. \label{parT2}
        If $\operatorname{Ric}$ is symmetric and $\operatorname{Ric}^g(T,T)\leq0$, then we have $\nabla^gT=0$ on $M$.
    \end{enumerate}
\end{proposition}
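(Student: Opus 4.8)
The plan is to handle the two parts separately: part (\ref{parT1}) is a direct verification, while part (\ref{parT2}) hinges on combining a Bochner-type identity with the two structural equations (T1) and (T2) to force a pointwise inequality that collapses to $\nabla^g T = 0$.

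For part (\ref{parT1}), assume $\nabla^g T = 0$. Then the statistical/rough Laplacian $\Delta_g T = \sum_i\bigl(\nabla^g_{e_i}\nabla^g_{e_i}-\nabla^g_{\nabla^g_{e_i}e_i}\bigr)T$ vanishes termwise, and parallelism of $T$ also gives $R^g(\,\cdot\,,\cdot\,)T=0$, hence $\operatorname{Ric}^g(e_i,T)=0$ for every $i$; this is exactly \eqref{eq1}. Likewise $\operatorname{div}^g(T)=\mathrm{tr}_g\nabla^g T=0$ and $\nabla^g_T T=0$, which is \eqref{eq2}. So the semi-equiaffine condition holds.

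For part (\ref{parT2}), I would first record the consequences of the hypotheses. Since $\operatorname{Ric}$ is symmetric, Remark \ref{symric} shows $T$ satisfies \eqref{symvec}, i.e. the Tchebychev operator $\mathcal{T}=\nabla^g T$ is $g$-self-adjoint. As $\operatorname{Ric}^g$ is symmetric, the vector $\sum_i\operatorname{Ric}^g(e_i,T)e_i$ is the $g$-dual of $\operatorname{Ric}^g(T,\cdot)$, so \eqref{eq1} reads $\Delta_g T=-\operatorname{Ric}^g(T,\cdot)^\sharp$, whence $g(\Delta_g T,T)=-\operatorname{Ric}^g(T,T)$. Feeding \eqref{eq1} into \eqref{intchangelap} (valid since $T$ satisfies \eqref{symvec}) yields the pointwise identity $X\operatorname{div}^g(T)=-2\operatorname{Ric}^g(T,X)$ for all $X$, and in particular $T\operatorname{div}^g(T)=-2\operatorname{Ric}^g(T,T)$. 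The crux is then to compute $\tfrac12\Delta_g|T|^2$ in two ways and equate. On one hand, the Bochner formula $\tfrac12\Delta_g|T|^2=g(\Delta_g T,T)+|\nabla^g T|^2$ gives $\tfrac12\Delta_g|T|^2=|\nabla^g T|^2-\operatorname{Ric}^g(T,T)$. On the other hand, self-adjointness of $\mathcal{T}$ gives $\tfrac12\operatorname{grad}|T|^2=\nabla^g_T T$, which by \eqref{eq2} equals $-\operatorname{div}^g(T)\,T$; taking divergence and using $\operatorname{div}^g(\operatorname{div}^g(T)\,T)=(\operatorname{div}^g T)^2+T\operatorname{div}^g(T)$ together with the gradient identity above produces $\tfrac12\Delta_g|T|^2=2\operatorname{Ric}^g(T,T)-(\operatorname{div}^g T)^2$. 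Equating the two expressions gives
\[
|\nabla^g T|^2=3\operatorname{Ric}^g(T,T)-(\operatorname{div}^g T)^2\le 0,
\]
where the inequality uses $\operatorname{Ric}^g(T,T)\le 0$; since the left side is $\ge 0$, I conclude $\nabla^g T=0$.

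The main obstacle—and the point that lets the statement hold with no compactness assumption—is to avoid the usual integrated Bochner argument (which would need $M$ closed) by extracting a \emph{second}, purely pointwise expression for $\tfrac12\Delta_g|T|^2$ out of \eqref{eq2}. This rests on the identity $\tfrac12\operatorname{grad}|T|^2=\nabla^g_T T$, which holds precisely because $\mathcal{T}=\nabla^g T$ is self-adjoint, and on the gradient formula $d\operatorname{div}^g(T)=-2\operatorname{Ric}^g(T,\cdot)$ coming from \eqref{intchangelap}; the remaining work is careful sign bookkeeping in the two divergence computations.
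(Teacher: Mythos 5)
Your proof is correct and follows essentially the same route as the paper: part (1) by direct verification, and part (2) by rewriting \eqref{eq2} as $\tfrac12\operatorname{grad}g(T,T)=-\operatorname{div}^g(T)T$ via symmetry of $\operatorname{Ric}$, taking the divergence, and combining with the Bochner identity and \eqref{intchangelap} to reach the same pointwise relation $g(\nabla^gT,\nabla^gT)+(\operatorname{div}^gT)^2-3\operatorname{Ric}^g(T,T)=0$. The paper performs the same computation in a normal orthonormal frame rather than invariantly, but the argument is identical in substance.
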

\begin{proof}
    For $(\ref{parT1})$, The equations (\ref{eq1}) and (\ref{eq2}) both obviously hold if $\nabla^gT=0$ on $M$.\\
    We prove $(\ref{parT2})$ next.
    If $\operatorname{Ric}$ is symmetric, by Remark \ref{symric} for any $X\in\Gamma(TM)$ we have
    \begin{equation*}
        g(\nabla^g_T T,X) = \frac{1}{2}Xg(T,T).
    \end{equation*}
    Hence, the equation (\ref{eq2}) can be written as
    \begin{equation}
    \label{eq2_2}
        g(\operatorname{div}^g(T)T,X) = -\frac{1}{2}Xg(T,T)
    \end{equation}
    for any $X\in\Gamma(TM)$. 
    Fix a point $p\in M$, and take an orthonormal frame $\{e_1,\dots,e_m\}$ around $p$ such that $(\nabla^{g}e_1)_p=\cdot\cdot\cdot=(\nabla^{g}e_m)_p=0$. 
    From equation (\ref{eq2_2}), we have
    \begin{equation}
        \label{eq2_3}\sum_{i=1}^me_ig(\operatorname{div}^g(T)T,e_i)=-\frac{1}{2}\sum_{i=1}^me_ie_ig(T,T).
    \end{equation}
    Here, on the left-hand side, from equations (\ref{intchangelap}) and (\ref{eq1}), we have
    \begin{equation}
        \label{eq2_4}
        \begin{split}
            \sum_{i=1}^me_ig(\operatorname{div}^g(T)T,e_i)&=\sum_{i=1}^mg(\nabla^g_{e_i}(\operatorname{div}^g(T)T),e_i)\\
            &=T\operatorname{div}^g(T)+(\operatorname{div}^g(T))^2\\
            &=g(\Delta_gT,T)-\operatorname{Ric}^g(T,T)+ (\operatorname{div}^g(T))^2\\
            &=-2\operatorname{Ric}^g(T,T)+ (\operatorname{div}^g(T))^2.           
        \end{split}
    \end{equation}
    The right-hand side of equation (\ref{eq2_3}) is equal to $-\frac{1}{2}\Delta_gg(T,T)$, and from equation (\ref{eq1}), we have
    \begin{equation*}
        \begin{split}
   \Delta_gg(T,T)&=2g(\Delta_gT,T)+2g(\nabla^gT,\nabla^gT)\\
            &=-2\operatorname{Ric}^g(T,T)+2g(\nabla^gT,\nabla^gT).
        \end{split}
    \end{equation*}
    Therefore, since the choice of $p\in M$ was arbitrary, we have
    \begin{equation*}
        -3\operatorname{Ric}^g(T,T)+(\operatorname{div}^g(T))^2+g(\nabla^gT,\nabla^gT)=0
    \end{equation*}
    on $M$ from equation (\ref{eq2_3}). 
    Since $\operatorname{Ric}^g\leq0$ on $M$, we have $\nabla^gT=0$.
\end{proof}

\vspace{0.5\baselineskip}

\begin{example}
\label{Tchopzero}
    Let $(g,\nabla)$ be a statistical structure on $M$ induced from a l.s.c. centroaffine immersion as in Example \ref{Centroaffine Geometry}. 
    If the Tchebychev operator vanishes on $M$, then $(M,g,\nabla)$ satisfies the semi-equiaffine condition. 
    Such classes of centroaffine surfaces are classified in \cite{Liu1995TheCT}.
\end{example}

\begin{example}
    Let $(\mathbb{R}^m,g_0)$ be the Eucidean space, $C\in\Gamma(TM^{(0,3)})$ a totally symmetric tensor field such that $\nabla^{g_0}C=0$. 
    Then, we obtain a statistical manifold $(\mathbb{R}^m,g_0,\nabla)$ by equation (\ref{cubic}). 
    Here, the Tchebychev vector field $T$ satisfies $\nabla^{g_0}T=0$. 
    The statistical structure can be projected on the standard Torus $T^m=\mathbb{R}^m/\mathbb{Z}^m$, thus we have a compact statistical manifold that satisfies the semi-equiaffine condition.
\end{example}

\section{Statistical manifolds of constant curvature metric}
In this section, we determine the statistical structure of a statistical manifold that satisfies the semi-equiaffine condition. 
The statistical manifold $(M,g,\nabla)$ is assumed to have constant curvature and for $(M,g)$ to be a complete Riemannian manifold of constant sectional curvature.

It is well known that for a Riemannian manifold $(M,g)$, there exists a Riemannian universal covering $\varpi:(\Tilde{M},\Tilde{g})\to (M,g)$. The Riemannian manifold $(\Tilde{M},\Tilde{g})$ is complete if and only if $(M,g)$ is complete. 
For a statistical manifold $(M,g,\nabla)$ and a universal covering $\varpi:\Tilde{M}\to M$, we can induce a statistical structure $(\Tilde{g},\Tilde{\nabla})$ on $\Tilde{M}$ by $\Tilde{g}=\varpi^*g$ and
\begin{equation}
\label{indcon}
    \varpi_*(\Tilde{\nabla}_X Y)=\nabla_{\varpi_*X}\varpi_*Y,\quad X,Y\in\Gamma(TM). 
\end{equation}
For the Tchebychev vector field $T$, $\Tilde{T}$ of $(M,g,\nabla)$, $(\Tilde{M},\Tilde{g},\Tilde{\nabla})$, respectively, the following equation holds:
\begin{equation*}
    \varpi_*(\Tilde{T})=T.
\end{equation*}
Therefore, $(\Tilde{M},\Tilde{g},\Tilde{\nabla})$ satisfies the equiaffine condition if and only if $(M,g,\nabla)$ satisfies the equiaffine condition. This equivalence also holds for the semi-equiaffine condition.

The following well known proposition classifies complete Riemannian manifolds of constant sectional curvature\cite{Hopf1926, Killing1891}.
\begin{proposition}
\label{Riemcsc}
    Let $(M^m,g)$ be a complete Riemannian manifold of constant sectional curvature $c$.
    Then, the Riemannian universal covering is isomorphic to either
    \renewcommand{\labelenumi}{$(\theenumi)$}
    \begin{enumerate}
        \item The Euclidean sphere $(S^m(c),g_0)$ of radius $c^{-\frac{1}{2}}$, if $c>0$.
        \item The Euclidean space $(\mathbb{R}^m,g_0)$, if $c=0$.
        \item The hyperbolic space $(\mathbb{H}^m(c),g_0)$ of constant sectional curvature $c$, if $c<0$.
    \end{enumerate}
\end{proposition}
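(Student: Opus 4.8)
The plan is to reduce the statement to the classification of \emph{simply connected} complete constant-curvature manifolds and then match each such manifold with its model space by a curvature-rigidity argument. First I would record that the covering projection $\varpi:(\tilde M,\tilde g)\to(M,g)$ is a local isometry, so $(\tilde M,\tilde g)$ again has constant sectional curvature $c$; it is complete precisely when $(M,g)$ is, and it is simply connected by construction. Thus it suffices to show that any complete, simply connected Riemannian $m$-manifold of constant curvature $c$ is isometric to the model space $M_c$, where $M_c=S^m(c)$ for $c>0$, $M_c=\mathbb R^m$ for $c=0$, and $M_c=\mathbb H^m(c)$ for $c<0$. A direct computation shows each $M_c$ itself has constant curvature $c$ and is complete and simply connected (for $m\ge 2$), so these are legitimate targets.

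The core step is to produce a local isometry $f:\tilde M\to M_c$. I would fix base points $p\in\tilde M$, $q\in M_c$ together with a linear isometry $I:T_p\tilde M\to T_qM_c$, and invoke \emph{Cartan's theorem} on the determination of the metric by the curvature: the candidate $f=\exp_q\circ I\circ\exp_p^{-1}$ is a local isometry wherever it is defined, provided the curvature tensors of the two spaces are intertwined by parallel transport and $I$ along every geodesic issuing from $p$. This hypothesis is automatic here, because on a constant-curvature space the tensor $R$ is algebraically determined by $c$ and $g$ through $R(X,Y)Z=c\,(g(Y,Z)X-g(X,Z)Y)$; hence it is preserved under any linear isometry and its parallel transports. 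The verification of Cartan's theorem itself runs through \emph{Jacobi fields}: along a unit-speed geodesic a normal Jacobi field $J$ obeys $J''+cJ=0$, whose solutions are $\sin(\sqrt{c}\,t)/\sqrt{c}$, $t$, or $\sinh(\sqrt{-c}\,t)/\sqrt{-c}$ according to the sign of $c$, and these depend only on $c$; consequently $f$ carries a $g$-orthonormal frame to an orthonormal frame and is a local isometry.

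To globalize, I would split on the sign of $c$. When $c\le 0$ there are no conjugate points, so by the \emph{Cartan--Hadamard theorem} both $\exp_p$ and $\exp_q$ are global diffeomorphisms; then $f$ is defined on all of $\tilde M$ and is a global isometry, settling the cases $c=0$ and $c<0$. When $c>0$ the exponential map is no longer injective, which is where the main difficulty lies. Here I would argue via the Cartan--Ambrose--Hicks extension of the above construction, which uses the simple connectivity of $\tilde M$ to make $f$ well-defined independently of the chosen geodesic, yielding a globally defined local isometry $f:\tilde M\to S^m(c)$. Since $\tilde M$ is complete, a local isometry out of it is automatically a Riemannian covering map; and since $S^m(c)$ is simply connected for $m\ge 2$, this covering has a single sheet, so $f$ is a diffeomorphism and hence an isometry.

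The main obstacle is precisely the positive-curvature case: conjugate points destroy the global diffeomorphism property of $\exp_p$, so the naive formula for $f$ yields only a local isometry. The resolution rests on two facts, that on a simply connected domain the local isometry extends consistently, and that a local isometry with complete domain is a covering map, after which simple connectivity of the sphere forces $f$ to be bijective.
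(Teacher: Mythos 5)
Your argument is a correct and standard proof of the Killing--Hopf classification: reduce to the simply connected complete case, build the candidate isometry via $\exp_q\circ I\circ\exp_p^{-1}$ and Cartan's local rigidity theorem (automatic here since constant curvature pins down $R$ algebraically), globalize by Cartan--Hadamard for $c\le 0$ and by Cartan--Ambrose--Hicks plus the fact that a local isometry from a complete manifold is a covering map for $c>0$. Note, however, that the paper offers no proof of this proposition at all --- it is stated as a well-known classical result with references to Hopf and Killing --- so there is nothing in the text to compare against; your write-up simply supplies the standard modern argument that those references (and any textbook treatment) would give. The only point worth tightening is the $c>0$ case: you should say explicitly that the well-definedness of $f$ independent of the chosen broken geodesic is exactly what Cartan--Ambrose--Hicks provides from simple connectivity of $\tilde M$, since the naive formula $\exp_q\circ I\circ\exp_p^{-1}$ is genuinely ill-defined past the first conjugate point; as sketched, your resolution is the right one.
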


\vspace{0.5\baselineskip}

To determine a statistical structure that uses a complete Riemannian metric of constant sectional curvature, we only need to consider the cases $(1)$, $(2)$, and $(3)$ in Proposition \ref{Riemcsc}. 
The next proposition will be used in the next subsection. For proof, see \cite{Berger1971LeSD} for example.

\begin{proposition}
\label{eigval}
    Let $\Delta_{g_0}$ be the Laplace operator of the Euclidean sphere $(S^m(c),g_0)$ of radius $c^{-\frac{1}{2}}$. 
    The eigenvalues $0=\lambda_0>\lambda_1>\cdot\cdot\cdot>\lambda_k\to-\infty$ of $\Delta_{g_0}$ are given by
    \begin{equation*}
        \lambda_k = -c\cdot k(k+m-1),\quad k=0,1,\ldots .
    \end{equation*}
\end{proposition}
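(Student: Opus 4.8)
The plan is to reduce the statement to the classical spectrum of the unit sphere $S^m\subset\mathbb{R}^{m+1}$ and then rescale. Working first on the unit sphere, I would identify the eigenfunctions as spherical harmonics, namely the restrictions to $S^m$ of homogeneous harmonic polynomials on $\mathbb{R}^{m+1}$. The essential tool is the decomposition of the flat Laplacian in polar coordinates $(r,\omega)\in(0,\infty)\times S^m$:
\[
\Delta_{\mathbb{R}^{m+1}}=\partial_r^2+\frac{m}{r}\partial_r+\frac{1}{r^2}\Delta_{S^m}.
\]
For a homogeneous polynomial $P$ of degree $k$, write $P=r^kY$ with $Y=P|_{S^m}$. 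If $P$ is harmonic, then substituting into the formula above and separating the radial factor gives
\[
\bigl(k(k-1)+mk\bigr)r^{k-2}Y+r^{k-2}\Delta_{S^m}Y=0,
\]
so that $\Delta_{S^m}Y=-k(k+m-1)Y$. Thus every degree-$k$ spherical harmonic is an eigenfunction of $\Delta_{S^m}$ with eigenvalue $-k(k+m-1)$.

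Next I would argue that these exhaust the spectrum. Every homogeneous polynomial decomposes as a finite sum $\sum_j |x|^{2j}H_{k-2j}$ with $H_i$ a harmonic homogeneous polynomial of degree $i$, so the restrictions of polynomials to $S^m$ lie in the span of the spherical harmonics; since polynomials are dense in $C(S^m)$ by the Stone--Weierstrass theorem, the spherical harmonics are complete in $L^2(S^m)$. Because $\Delta_{S^m}$ is an elliptic, self-adjoint operator on a compact manifold, its eigenspaces fill out $L^2(S^m)$, and therefore no eigenvalues other than $\{-k(k+m-1)\}_{k\ge0}$ occur. This completeness step is the main obstacle: the eigenvalue computation itself is a one-line separation of variables, whereas verifying that the spherical harmonics yield \emph{all} eigenfunctions rests on the harmonic decomposition of polynomials together with density.

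Finally, I would rescale to radius $c^{-1/2}$. Writing $g_0$ for the round metric on $S^m(c)$ of radius $R=c^{-1/2}$, we have $g_0=R^2 g_{\mathrm{unit}}$, and under a homothety $g\mapsto\lambda^2 g$ the Laplace--Beltrami operator transforms by $\Delta\mapsto\lambda^{-2}\Delta$. Hence the eigenvalues on $S^m(c)$ equal $R^{-2}=c$ times those on the unit sphere, giving $\lambda_k=-c\,k(k+m-1)$; the strict ordering $0=\lambda_0>\lambda_1>\cdots\to-\infty$ then follows immediately, since $c>0$ and $k\mapsto k(k+m-1)$ is strictly increasing on the nonnegative integers.
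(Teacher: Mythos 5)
Your proof is correct and complete: the separation-of-variables computation on homogeneous harmonic polynomials, the completeness argument via the decomposition $\sum_j |x|^{2j}H_{k-2j}$ together with Stone--Weierstrass and self-adjointness, and the homothety rescaling $\Delta\mapsto\lambda^{-2}\Delta$ are all carried out accurately. The paper does not prove this proposition itself but only cites \cite{Berger1971LeSD}, and your argument is precisely the classical one found there, so you have simply supplied the details the paper omits.
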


\subsection{Determination of the Tchebychev vector field}
\quad On a statistical manifold $(M^m,g,\nabla)$ with a constant curvature metric $g$ of constant $c$, the equation (\ref{eq1}) is reduced to
\begin{equation*}
    \Delta_gT+c(m-1)T=0
\end{equation*}
for the Tchebychev vector field $T$.
The next lemma is Theorem 5.3 in \cite{MR124854}.

\begin{lemma}
\label{geov}
    Let $(M,g)$ be a compact Riemannian manifold with positive constant sectional curvature $c$. 
    For any vector field $V$ on $(M,g)$ satisfying equation $(\ref{geovec1})$, there exists a unique function $f\in C^{\infty}(M)$ and a unique Killing vector field $\xi\in\Gamma(TM)$ such that
    \begin{equation}
    \label{yano1}
        V=\operatorname{grad}_gf+\xi,
    \end{equation}
    \begin{equation}
    \label{yano2}
        \Delta_g f=-2c(m-1)f.
    \end{equation}
\end{lemma}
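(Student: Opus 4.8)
The plan is to translate the pointwise first-order condition (\ref{geovec1}) into an eigenvalue statement for the Hodge--de Rham Laplacian and then split $V$ by Hodge theory. First I would pass from $V$ to its metric dual one-form $\omega=V^\flat$. On a space of constant sectional curvature $c$ the Ricci endomorphism equals $(m-1)c\cdot\mathrm{Id}$, so (\ref{geovec1}) reduces to $\Delta_g V+(m-1)cV=0$, exactly as already recorded in the text for $T$. Recalling that with the present sign convention $\Delta_g=-\nabla^{g*}\nabla^g$, the Weitzenb\"ock formula on one-forms reads $\Delta^{\mathrm{Hodge}}=\nabla^{g*}\nabla^g+\operatorname{Ric}^g=-\Delta_g+(m-1)c\,\mathrm{Id}$, and hence $\Delta^{\mathrm{Hodge}}\omega=2(m-1)c\,\omega$. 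Thus $\omega$ is an eigenform of $\Delta^{\mathrm{Hodge}}$ for the eigenvalue $\mu:=2c(m-1)$, which is positive because $c>0$.

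Next I would invoke the Hodge decomposition on the compact manifold, $\omega=df+\delta\beta+h$ into exact, coexact and harmonic parts. Since $\Delta^{\mathrm{Hodge}}$ commutes with $d$ and $\delta$ and preserves these three summands, each piece is itself a $\mu$-eigenform; the harmonic part has eigenvalue $0\neq\mu$, so $h=0$ (equivalently, positive Ricci curvature forces $H^1(M)=0$ by Bochner's theorem). Writing $\xi:=(\delta\beta)^\sharp$ already yields the splitting $V=\operatorname{grad}_g f+\xi$ with $\operatorname{div}^g\xi=0$. From $\Delta^{\mathrm{Hodge}}(df)=\mu\,df$ and $\Delta^{\mathrm{Hodge}}f=-\Delta_g f$ I would deduce $d(\Delta_g f+\mu f)=0$, so $\Delta_g f+\mu f$ is constant; absorbing this constant into $f$ does not change $df$ and produces exactly $\Delta_g f=-2c(m-1)f$, which is (\ref{yano2}).

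The heart of the argument is showing that the coexact part $\xi$ is Killing. Applying Weitzenb\"ock to the $\mu$-eigenform $\xi^\flat$ gives $\Delta_g\xi+(m-1)c\,\xi=0$, so $\xi$ again solves (\ref{geovec1}) and satisfies $\operatorname{div}^g\xi=0$ together with $\operatorname{Ric}^g(\xi,\xi)=(m-1)c|\xi|^2$. I would then run two integral Bochner identities on the compact manifold: integrating $\tfrac12\Delta_g|\xi|^2=g(\Delta_g\xi,\xi)+|\nabla^g\xi|^2$ gives $\int_M|\nabla^g\xi|^2=(m-1)c\int_M|\xi|^2$, while integration by parts combined with the Ricci identity yields $\tfrac12\int_M|\mathcal{L}_\xi g|^2=\int_M\bigl(|\nabla^g\xi|^2+(\operatorname{div}^g\xi)^2-\operatorname{Ric}^g(\xi,\xi)\bigr)$. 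Substituting $\operatorname{div}^g\xi=0$ and the two preceding relations makes the right-hand side vanish, forcing $\mathcal{L}_\xi g\equiv0$, so $\xi$ is Killing. I expect this step---in particular deriving the second integral identity cleanly and keeping the sign conventions for $\Delta_g$, $\nabla^{g*}\nabla^g$ and $\delta$ mutually consistent---to be the main technical obstacle.

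Finally, uniqueness is immediate: the Hodge decomposition is unique, so the exact and coexact parts of $\omega$, and hence $\operatorname{grad}_g f$ and $\xi$, are determined by $V$; and since $\mu=2c(m-1)\neq0$, the equation $\Delta_g f=-\mu f$ leaves no additive-constant freedom, pinning down $f$ uniquely. This establishes (\ref{yano1}) and (\ref{yano2}) with the asserted uniqueness.
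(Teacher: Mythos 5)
Your argument is correct. Note, however, that the paper does not prove this lemma at all: it is quoted as Theorem 5.3 of Yano--Nagano \cite{MR124854}, so there is no in-paper proof to compare against. What you have written is essentially the classical proof of that cited result: dualize to a one-form, use constant curvature to turn (\ref{geovec1}) into the Hodge eigenvalue equation $\Delta^{\mathrm{Hodge}}\omega=2c(m-1)\omega$, kill the harmonic part by positivity of the Ricci curvature, normalize $f$ by the additive constant, and then detect the Killing property of the coclosed part via Yano's integral formula $\tfrac12\int_M|\mathcal{L}_\xi g|^2=\int_M\left(|\nabla^g\xi|^2+(\operatorname{div}^g\xi)^2-\operatorname{Ric}^g(\xi,\xi)\right)$ combined with the Bochner identity; all the sign conventions you use are mutually consistent with the paper's $\Delta_g=\mathrm{tr}_g\nabla^2$. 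The only point worth making explicit in the uniqueness step is that any Killing field is divergence-free, so for an arbitrary competing decomposition $V=\operatorname{grad}_gf'+\xi'$ the form $(\xi')^\flat$ is coclosed and $df'$ is exact, which is what lets you identify them with the Hodge components; with that one line added, the proof is complete.
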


The next lemma is from Corollary 2.9 in \cite{10.2969/jmsj/03620295}. 
\begin{lemma}
\label{geov2}
    Let $V$ be a vector field on the Euclidean sphere $(S^2(c),g_0)$ satisfying the equation $\mathrm{(T1)}$. 
    Then, there exists a unique function $\varphi\in C^{\infty}(S^2(c))$ such that $L_Vg_0 = \varphi g_0$.
    Here, $L_Vg_0$ is the Lie derivative of $g_0$ with respect to $V$, that is,
    \begin{equation*}
        (L_Vg_0)(X,Y) = g_0(\nabla^{g_0}_XV,Y)+g_0(\nabla^{g_0}_YV, X),\quad X,Y\in\Gamma(TM).
    \end{equation*}
\end{lemma}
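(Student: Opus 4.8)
The plan is to reduce the statement to Obata's equation for first eigenfunctions, using the decomposition already supplied by Lemma \ref{geov}. Since $(S^2(c),g_0)$ is a compact Riemannian manifold of positive constant sectional curvature $c$ and $V$ satisfies $\mathrm{(T1)}$, which is exactly equation (\ref{geovec1}), Lemma \ref{geov} applies: there are a unique $f\in C^\infty(S^2(c))$ and a unique Killing field $\xi\in\Gamma(TS^2(c))$ with
\begin{equation*}
V=\operatorname{grad}_{g_0}f+\xi,\qquad \Delta_{g_0}f=-2cf,
\end{equation*}
the eigenvalue equation being the $m=2$ case of (\ref{yano2}). By Proposition \ref{eigval} the number $-2c$ is precisely the first nonzero eigenvalue $\lambda_1$ of $\Delta_{g_0}$, so $f$ is either identically zero or a first eigenfunction.

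The crux is to show that $f$ obeys Obata's equation
\begin{equation*}
\operatorname{Hess}f=-cf\,g_0,\qquad \operatorname{Hess}f(X,Y):=g_0\!\left(\nabla^{g_0}_X\operatorname{grad}_{g_0}f,\,Y\right).
\end{equation*}
I would establish this by integrating the Bochner formula
\begin{equation*}
\tfrac12\,\Delta_{g_0}\!\left|\operatorname{grad}_{g_0}f\right|^2=|\operatorname{Hess}f|^2+g_0\!\left(\operatorname{grad}_{g_0}f,\operatorname{grad}_{g_0}\Delta_{g_0}f\right)+\operatorname{Ric}^{g_0}\!\left(\operatorname{grad}_{g_0}f,\operatorname{grad}_{g_0}f\right)
\end{equation*}
over the closed surface $S^2(c)$. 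Since $\operatorname{Ric}^{g_0}=c\,g_0$ in dimension two and $\Delta_{g_0}f=-2cf$, integration by parts gives $\int_{S^2(c)}|\operatorname{Hess}f|^2\,d\mu_{g_0}=\tfrac12\int_{S^2(c)}(\Delta_{g_0}f)^2\,d\mu_{g_0}$, which is exactly the statement that the $L^2$-norm of the trace-free Hessian $\operatorname{Hess}f-\tfrac12(\Delta_{g_0}f)g_0$ vanishes. Hence $\operatorname{Hess}f=\tfrac12(\Delta_{g_0}f)\,g_0=-cf\,g_0$.

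Granting Obata's equation, the rest is formal. The Hessian being symmetric, $(L_{\operatorname{grad}_{g_0}f}\,g_0)(X,Y)=2\operatorname{Hess}f(X,Y)=-2cf\,g_0(X,Y)$, while $L_\xi g_0=0$ as $\xi$ is Killing; therefore $L_Vg_0=-2cf\,g_0$, and $\varphi=-2cf$ proves existence. For uniqueness I would take the $g_0$-trace of $L_Vg_0=\varphi g_0$: since $\operatorname{tr}_{g_0}(L_Vg_0)=2\operatorname{div}^{g_0}(V)$ and $\operatorname{tr}_{g_0}(\varphi g_0)=2\varphi$ when $m=2$, the factor is forced to be $\varphi=\operatorname{div}^{g_0}(V)$.

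I expect the integrated Bochner computation to be the only delicate point, as it is where the exact value $\lambda_1=-2c$ and the constancy of the curvature combine to annihilate the trace-free Hessian; the decomposition and the conformality bookkeeping are routine. A concrete alternative to the Bochner step would be to use that first eigenfunctions on $S^2(c)\subset\mathbb{R}^3$ are restrictions of linear functions and to read off $\operatorname{Hess}f=-cf\,g_0$ from the umbilic second fundamental form of the round sphere, which yields the same conclusion.
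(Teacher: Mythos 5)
Your argument is correct. The key computation checks out: with the paper's sign convention $\Delta_{g_0}f=\operatorname{tr}\operatorname{Hess}f=-2cf$ and $\operatorname{Ric}^{g_0}=c\,g_0$ on $S^2(c)$, integrating Bochner gives $\int|\operatorname{Hess}f|^2\,d\mu_{g_0}=(4c^2-2c^2)\int f^2\,d\mu_{g_0}=\tfrac12\int(\Delta_{g_0}f)^2\,d\mu_{g_0}$, which is precisely the vanishing of the $L^2$-norm of $\operatorname{Hess}f-\tfrac12(\Delta_{g_0}f)g_0$ in dimension two (and the case $f\equiv 0$ is vacuously fine); the passage from $\operatorname{Hess}f=-cf\,g_0$ and $L_\xi g_0=0$ to $L_Vg_0=-2cf\,g_0$, and the uniqueness of $\varphi$ by tracing, are both sound. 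The comparison with the paper is lopsided, because the paper does not prove this lemma at all: it imports it verbatim as Corollary 2.9 of Chen--Nagano \cite{10.2969/jmsj/03620295}. Your route --- Yano's decomposition from Lemma \ref{geov} combined with an Obata-type rigidity argument pinning down first eigenfunctions, with Proposition \ref{eigval} identifying $-2c$ as $\lambda_1$ --- makes the statement self-contained given the results already quoted, and it has the concrete side benefit of exhibiting the conformal factor as $\varphi=-2cf=\Delta_{g_0}f=\operatorname{div}^{g_0}(V)$, which is exactly the chain of identities the paper invokes without justification in the proof of Theorem \ref{determtche}. The only caveat is that you trade one external citation for another: your proof leans on Lemma \ref{geov} (Yano--Nagano), which the paper also states without proof, so the argument is self-contained only relative to that input; your suggested alternative via the restriction of linear functions to $S^2(c)\subset\mathbb{R}^3$ would remove even the Bochner step but not that dependence.
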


\vspace{0.5\baselineskip}

We can now determine the Tchebychev vector field of statistical structures on complete Riemannian manifolds with an Einstein metric.
\begin{theorem}
    \label{determtche}
        Let $(M,g,\nabla)$ be a statistical manifold satisfying the semi-equiaffine condition with symmetric Ricci tensor field. 
        Assume that $(M,g)$ is a complete simply-connected Riemannian manifold of constant curvature $c$.
        Then, the Tchebychev vector field $T$ is determined by the following$:$    \renewcommand{\labelenumi}{$\operatorname{\theenumi.}$}
        \renewcommand{\labelenumi}{$(\theenumi)$}
        \begin{enumerate}
            \item If $(M,g)=(S^m(c),g_0)$, then $T=0$.
            \item If $(M,g)$ is the Euclidean space $(\mathbb{R}^m,g_0)$, then there exists $a^1,\dots,a^m\in\mathbb{R}$ such that
            \begin{equation}
            \label{detT}
                T  = \sum_{i=1}^m a^i\frac{\partial}{\partial x^i}.
            \end{equation}
            Here, $\{x^1,\dots,x^m\}$ is a coordinate system on $\mathbb{R}^m$ such that $\nabla^{g_0}dx^i=0$.
            \item If $(M,g)=(\mathbb{H}^m(c),g_0)$, then $T=0$.
        \end{enumerate}
    \end{theorem}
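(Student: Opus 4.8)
The plan is to exploit that on a space of constant curvature $c$ one has $\operatorname{Ric}^g = c(m-1)g$, so that $(\ref{eq1})$ reduces to $\Delta_g T + c(m-1)T = 0$ while $\operatorname{Ric}^g(T,T) = c(m-1)\|T\|^2$. A preliminary observation I would record is that, since $\operatorname{Ric}$ is symmetric, the $1$-form $g(T,\cdot)$ is closed by $(\ref{symvec})$ and the torsion-freeness of $\nabla^g$; as $M$ is simply-connected it is then exact, so $T = \operatorname{grad}_g f$ for some global $f \in C^\infty(M)$. This representation is only needed in the spherical case, the other two being governed by a sign argument.

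For $c \le 0$ (the Euclidean and hyperbolic cases), I would invoke part $(\ref{parT2})$ of Proposition \ref{parT}: here $\operatorname{Ric}^g(T,T) = c(m-1)\|T\|^2 \le 0$ and $\operatorname{Ric}$ is symmetric, so $\nabla^g T = 0$. On $(\mathbb{R}^m,g_0)$ a parallel field has constant components in a parallel coordinate frame, giving exactly $(\ref{detT})$. On $(\mathbb{H}^m(c),g_0)$ a parallel field satisfies $\Delta_g T = 0$, so $(\ref{eq1})$ forces $c(m-1)T = 0$, whence $T = 0$ since $c \neq 0$.

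The spherical case $c>0$ is the genuine obstacle, since now $\operatorname{Ric}^g(T,T)\ge 0$ and Proposition \ref{parT} does not apply. Because $T$ satisfies $(\ref{eq1})$, Lemma \ref{geov} gives $T = \operatorname{grad}_g f + \xi$ with $\xi$ Killing and $\Delta_g f = -2c(m-1)f$. Comparing with the gradient representation above, $\xi = T - \operatorname{grad}_g f$ is a Killing gradient field, hence divergence-free with harmonic potential on the compact $M$, so $\xi = 0$ and $T = \operatorname{grad}_g f$ with $\Delta_g f = -2c(m-1)f$. For $m \ge 3$ I would compare $-2c(m-1)$ with the spectrum in Proposition \ref{eigval}: the equation $k(k+m-1) = 2(m-1)$ has no nonnegative integer solution when $m \ge 3$, so $-2c(m-1)$ is not an eigenvalue, forcing $f = 0$ and $T = 0$.

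The remaining difficulty is $m=2$, where $-2c(m-1) = -2c = \lambda_1$ is a genuine eigenvalue and $(\ref{eq1})$ cannot by itself remove $T$; this is precisely where $(\ref{eq2})$ and Lemma \ref{geov2} are indispensable. From $L_T g_0 = \varphi g_0$ together with the symmetry of $\nabla^g T$ coming from the gradient representation, I would deduce $\nabla^g_X T = \tfrac{\varphi}{2}X$, hence $\operatorname{div}^g(T) = \varphi$ and $\nabla^g_T T = \tfrac{\varphi}{2}T$. Substituting into $(\ref{eq2})$ yields $\tfrac{3}{2}\varphi T = 0$, and since $\varphi = \operatorname{div}^g(T) = \Delta_g f = -2cf$ this becomes $f\operatorname{grad}_g f = 0$, so $f^2$ is constant; a nonzero constant is impossible, because a nonconstant eigenfunction for the eigenvalue $-2c \neq 0$ is $L^2$-orthogonal to the constants and must change sign, whence $f \equiv 0$ and $T = 0$. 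I expect the sphere $S^2(c)$ to be the principal technical obstacle, being the unique dimension in which the curvature equation $(\ref{eq1})$ fails to be rigid and the second equation $(\ref{eq2})$ must be brought in.
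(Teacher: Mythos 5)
Your proposal is correct and follows essentially the same route as the paper: Proposition \ref{parT}$(2)$ for the cases $c\le 0$, the Yano--Nagano decomposition of Lemma \ref{geov} together with the spectrum in Proposition \ref{eigval} for $S^m(c)$ with $m\ge 3$, and Lemma \ref{geov2} combined with $(\ref{eq2})$ for $S^2(c)$. Your minor additions (deriving $\xi=0$ via the global gradient representation on a simply-connected manifold, and closing the hyperbolic case by feeding $\Delta_g T=0$ back into $(\ref{eq1})$) are valid and in fact make explicit two steps the paper leaves terse.
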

\begin{proof}
    The case $(2)$ and $(3)$ follows from Proposition \ref{parT} $(2)$, since we obtain $\nabla^gT=0$ from $c\leq0$.
    Assume that $(M,g)=(S^m(c),g_0)$.
    From Lemma \ref{geov}, there exists a unique $f\in C^{\infty}(S^m(c))$ and a unique Killing vector field $\xi\in\Gamma(TS^m(c))$ such that the equations (\ref{yano1}) and (\ref{yano2}) hold for $T$. 
    As stated in Remark \ref{symric}, we obtain $\xi=0$ since $\operatorname{Ric}$ is symmetric. 
    From Proposition \ref{eigval}, we see that $-2c(m-1)$ is not an eigenvalue of $\Delta_{g_0}$ unless $m=2$. 
    Therefore, we have $f=0$ and $T=\operatorname{grad}_{g_0}f=0$ if $m\geq3$. 
    For $m=2$, from Lemma \ref{geov2} there exists a unique function $\varphi\in C^{\infty}(S^2(c))$ that satisfies $L_Tg_0 = \varphi g_0$. 
    Here, the following equality holds$:$
    \begin{equation*}
        \varphi = \operatorname{div}^{g_0}(T)=\Delta_{g_0}f.
    \end{equation*}
    From equation (\ref{eq2}), we have
    \begin{equation*}
        \begin{split}
            0&=g_0(\nabla^{g_0}_TT,T)+\operatorname{div}^{g_0}(T)g_0(T,T)\\
            &=\frac{1}{2}\varphi g_0(T,T)+\sum_{i=1}^2g_0(\nabla^{g_0}_{e_i}T,e_i)g_0(T,T)\\
            &=\frac{2}{3}\varphi g_0(T,T).
        \end{split}
    \end{equation*}
    Therefore, it is induced $\varphi=0$ or $g_0(T,T)=0$, and thus $T=0$.
\end{proof}
\subsection{Determination of the statistical structure}
We will determine the statistical structure $(g,\nabla)$ in the case where $(M^m,g,\nabla)$ is of constant curvature.

\begin{lemma}
\label{lapC}
    Let $(M,g,\nabla)$ be a conjugate symmetric statistical manifold. 
    If $\nabla^{g}T=0$ on $M$, the following equality holds for the difference tensor $K$$:$
    \begin{equation}
        \Delta_g g(K,K) = 2g(F,K) + 2g(\nabla^g K,\nabla^g K).
    \end{equation}
    Here, the tensor field $F\in\Gamma(T^{(1,2)}M)$ is defined by 
    \begin{equation*}
        F(X,Y)=\sum_{l=1}^m(R^g(e_l,X)K)(e_l,Y),\quad X,Y\in\Gamma(TM),
    \end{equation*}
    where $\{e_1,\dots,e_m\}$ is an orthonormal frame on $M$ with respect to $g$.
\end{lemma}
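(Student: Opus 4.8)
The plan is to reduce the stated identity to a pointwise Weitzenb\"ock-type computation for the rough Laplacian of $K$ and then to identify the resulting curvature term with $F$. First I would record the scalar Bochner identity for the function $g(K,K)$: working at an arbitrary point $p$ with a $g$-orthonormal frame $\{e_i\}$ satisfying $(\nabla^g e_i)_p=0$ and differentiating twice, one gets
\begin{equation*}
\Delta_g g(K,K)=2\,g(\Delta_g K,K)+2\,g(\nabla^g K,\nabla^g K),
\end{equation*}
where $\Delta_g K=\operatorname{tr}_g\nabla^g\nabla^g K$ is the rough connection Laplacian of $K$. This already yields the term $2\,g(\nabla^g K,\nabla^g K)$, so the lemma is equivalent to the claim $g(\Delta_g K,K)=g(F,K)$; in fact I expect to prove the stronger pointwise identity $\Delta_g K=F$.

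Next I would assemble the three structural inputs. Setting $\widetilde{K}(X,Y,Z)=g(K(X,Y),Z)$, the tensor $\widetilde K$ is totally symmetric: symmetry in the first two slots is the torsion-freeness encoded in $K(X,Y)=K(Y,X)$, while symmetry in the last two is the Codazzi equation, namely the total symmetry of $C=\nabla g=-2\widetilde K$ recorded in Example~\ref{eqs}. Conjugate symmetry then upgrades this: since $R=\overline R$ is equivalent to $\nabla^g K$ being symmetric, $\nabla^g\widetilde K$ is symmetric in the differentiation direction and the first tensor slot, and combined with the total symmetry of $\widetilde K$ this makes $\nabla^g\widetilde K$ a \emph{totally symmetric} $(0,4)$-tensor. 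Finally, from $T=\operatorname{tr}_g K$ the hypothesis $\nabla^g T=0$ reads $\sum_i(\nabla^g_W\widetilde K)(e_i,e_i,Z)=0$ for all $W,Z$; the total symmetry of $\nabla^g\widetilde K$ lets me permute indices to conclude that the divergence $\sum_i(\nabla^g_{e_i}\widetilde K)(e_i,\cdot\,,\cdot)$ vanishes identically.

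The heart of the argument is the commutation computation. Writing $(\Delta_g\widetilde K)(X,Y,Z)=\sum_l(\nabla^g_{e_l}\nabla^g_{e_l}\widetilde K)(X,Y,Z)$ at $p$, I would use the total symmetry of $\nabla^g\widetilde K$ to interchange the inner differentiation direction with the first tensor argument, then apply the Ricci identity (with the curvature convention $R^g(X,Y)=\nabla^g_X\nabla^g_Y-\nabla^g_Y\nabla^g_X-\nabla^g_{[X,Y]}$ used throughout the paper) to commute the two outer derivatives. One resulting term is $\nabla^g_X$ of the divergence, which vanishes by the previous step, and what survives is exactly the curvature action
\begin{equation*}
(\Delta_g\widetilde K)(X,Y,Z)=\sum_{l=1}^m\bigl(R^g(e_l,X)\cdot\widetilde K\bigr)(e_l,Y,Z).
\end{equation*}
Expanding this action over the three tensor slots and comparing with $g(F(X,Y),Z)$, obtained by expanding $(R^g(e_l,X)K)(e_l,Y)$, I would match the two expressions term by term, yielding $\Delta_g K=F$ and hence $g(\Delta_g K,K)=g(F,K)$.

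The main obstacle I anticipate is precisely this final bookkeeping: pinning down the sign in the Ricci identity dictated by the paper's curvature convention, confirming that the non-curvature (``divergence of divergence'') term genuinely drops out, and verifying that the curvature action—which a priori acts on all three slots of $\widetilde K$—reproduces the definition of $F$. The delicate point is the term of $F$ in which $R^g$ acts on the output $K(e_l,Y)$: using the antisymmetry $g(R^g(e_l,X)e_p,Z)=-g(R^g(e_l,X)Z,e_p)$ this term is identified with the slot-wise curvature term of $\Delta_g\widetilde K$ acting on the last argument, so that the two expressions coincide. Tracking these symmetries consistently is where the real work lies, while the scalar Bochner step and the structural reductions are routine.
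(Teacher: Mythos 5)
Your proposal is correct and follows essentially the same route as the paper: the scalar Bochner identity reduces the claim to $\Delta_g K=F$, which is then obtained by using the total symmetry of $\nabla^g K$ (i.e.\ conjugate symmetry) to swap the inner differentiation direction with a tensor slot, applying the Ricci identity, and killing the remaining term via $\nabla^g_X T=\sum_l(\nabla^g_X K)(e_l,e_l)=0$. Working with the $(0,3)$-tensor $\widetilde K$ rather than the $(1,2)$-tensor $K$ is only a cosmetic difference, and your treatment of the curvature term acting on the output slot via skew-adjointness of $R^g$ is the correct way to match the two expressions.
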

\begin{proof}
    We compute $\Delta_g K$ since 
    \begin{equation}
        \label{lapeq}
        \Delta_g g(K,K) = 2g(\Delta_g K,K)+2g(\nabla^g K,\nabla^g K)
    \end{equation}
    holds. Fix $p\in M$ and take an orthonormal frame $\{e_1,\dots,e_m\}$ around $p$ with respect to $g$ such that $(\nabla^{g}e_1)_p=\cdot\cdot\cdot=(\nabla^{g}e_m)_p=0$. 
    Then, at $p$ we have
    \begin{equation*}
        \begin{split}
            (\Delta_g K)(e_i,e_j) &= \sum_{l=1}^m(\nabla^g_{e_l}(\nabla^gK))(e_i,e_j;e_l)\\
            &= \sum_{l=1}^m(\nabla^g_{e_l}(\nabla^gK))(e_l,e_j;e_i)\\
            &= \sum_{l=1}^m(\nabla^g_{e_i}(\nabla^gK))(e_l,e_l;e_j)+\sum_{l=1}^m(R^g(e_l,e_i)K)(e_l,e_j)\\
            &= \sum_{l=1}^m(R^g(e_l,e_i)K)(e_l,e_j).
        \end{split}
    \end{equation*}
    Here, the last equality follows from 
    \begin{equation*}
    \begin{split}
        \sum_{l=1}^m(\nabla^g_XK)(e_l,e_l)&=\sum_{k,l=1}^mg((\nabla^g_XK)(e_l,e_l),e_k)e_k\\
        &=\sum_{k,l=1}^mg((\nabla^g_XK)(e_l,e_k),e_l)e_k\\
        &=\sum_{k=1}^m(\nabla^g_X\eta)(e_k)e_k\\
        &=\nabla^g_X T,
    \end{split}
    \end{equation*}
    where $\eta(X)=g(X,T)$, $X\in\Gamma(TM)$. 
    Since $p\in M$ was arbitrary, equation (\ref{lapeq}) implies the desired result.
\end{proof}
\begin{theorem}
\label{main2}
    Let $(M^m,g,\nabla)$ be a statistical manifold of constant curvature $\lambda$ satisfying the semi-equiaffine condition.
    Assume that $(M,g)$ is a complete simply-connected Riemannian manifold of constant sectional curvature $c$. 
    The statistical structure $(g,\nabla)$ is determined as follows$:$
    \renewcommand{\labelenumi}{$(\theenumi)$}
    \begin{enumerate}
        \item If $(M,g)=(S^m(c),g_0)$, then we have $\nabla=\nabla^{g}$ and $\lambda=c$.
        \item If $(M,g)=(\mathbb{R}^m,g_0)$, then we have $\lambda\cdot m(m-1)=g(T,T)-g(K,K)$, and if we let
        \begin{equation}
            \label{determst}
            K=\sum_{i,j,k=1}^mK^k_{~ij}\frac{\partial}{\partial x^k}\otimes dx^i\otimes dx^j, 
        \end{equation}
        each element $K^k_{~ij}$, $i,j,k\in\{1,2,\dots,m\}$ is a constant.
        Here, $\{x^1,\dots,x^m\}$ is a coordinate system on $\mathbb{R}^m$ such that $\nabla^{g_0}dx^i=0$.
        \item If $(M,g)=(\mathbb{H}^m(c),g_0)$, then we have $\nabla=\nabla^{g}$ and $\lambda=-c$.
    \end{enumerate}
\end{theorem}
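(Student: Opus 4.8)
The plan is to reduce the whole statement to two pointwise identities for the difference tensor $K$ and then argue case by case using completeness. First I note that constant curvature $\lambda$ forces $\operatorname{Ric}=\lambda(m-1)g$, which is symmetric, and that the defining relation together with (\ref{intcurvature}) gives $R=\overline{R}$; hence $(M,g,\nabla)$ is conjugate symmetric, so $\nabla^gK$ is totally symmetric. Consequently Theorem \ref{determtche} applies and fixes $T$ (namely $T=0$ in cases $(1)$ and $(3)$, and $T=\sum_i a^i\tfrac{\partial}{\partial x^i}$ parallel in case $(2)$), and Lemma \ref{lapC} is available in every case since $\nabla^gT=0$.

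The first master identity comes from Lemma \ref{lapC}: I would compute the curvature term $F(X,Y)=\sum_l(R^g(e_l,X)K)(e_l,Y)$ on a space of constant sectional curvature $c$. Using total symmetry of $K$, a direct calculation shows that in each of our three cases $F=c(m+1)K$ (the terms linear in $T$ drop out, since $T=0$ whenever $c\neq0$), so that the tensor Laplacian satisfies
\begin{equation*}
\Delta_gK=c(m+1)K.
\end{equation*}
The second identity comes from the Gauss equation for $\nabla=\nabla^g+K$: by conjugate symmetry the two $\nabla^gK$ terms cancel, leaving the purely algebraic relation
\begin{equation*}
K(X,K(Y,Z))-K(Y,K(X,Z))=(\lambda-c)\bigl(g(Y,Z)X-g(X,Z)Y\bigr).
\end{equation*}
Contracting $X$ against the output and then tracing the remaining pair yields
\begin{equation*}
(\lambda-c)\,m(m-1)=g(T,T)-g(K,K).
\end{equation*}

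For case $(1)$ the sphere $S^m(c)$ is compact and $c>0$. Integrating the first identity and using $\int_Mg(\Delta_gK,K)\,d\mu_g=-\int_M\|\nabla^gK\|^2\,d\mu_g$ gives $c(m+1)\int_M\|K\|^2\,d\mu_g=-\int_M\|\nabla^gK\|^2\,d\mu_g$, whose left side is $\geq0$ and right side $\leq0$, so both vanish and $K=0$, i.e. $\nabla=\nabla^g$; the second identity then gives $\lambda=c$. For case $(2)$ we have $c=0$, so the first identity reads $\Delta_{g_0}K=0$, meaning each component $K^k_{~ij}$ is harmonic on $\mathbb{R}^m$ in the parallel frame; the second identity (with $c=0$) is exactly $\lambda\,m(m-1)=g(T,T)-g(K,K)$ and shows $\|K\|^2$ is constant, whence each $K^k_{~ij}$ is a bounded harmonic function and therefore constant by Liouville's theorem.

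The hyperbolic case $(3)$ is the main obstacle. Here $c<0$ and $T=0$, so the second identity forces $\|K\|^2=(c-\lambda)m(m-1)$ to be a nonnegative constant; then $\Delta_g\|K\|^2=0$, and the first identity only yields $\|\nabla^gK\|^2=-c(m+1)\|K\|^2$, which is consistent and does \emph{not} by itself give $K=0$. The difficulty is genuinely global: $\mathbb{H}^m$ is noncompact, and the eigentensor equation $\Delta_gK=c(m+1)K$ with negative eigenvalue admits bounded, even constant-norm, solutions in principle, so neither the compact integration of case $(1)$ nor the Liouville argument of case $(2)$ transfers directly. My plan is to combine the eigentensor equation with the rigidity carried by the Gauss relation — which after one contraction is equivalent to $\sum_iK_{e_i}^2=\tfrac{1}{m}\|K\|^2\,\mathrm{Id}$ together with $[K_X,K_Y]=(\lambda-c)(X\wedge Y)$, where $X\wedge Y$ is the skew endomorphism $Z\mapsto g(Y,Z)X-g(X,Z)Y$ — and the completeness of $(\mathbb{H}^m,g_0)$ to force $\|K\|^2\equiv0$. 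Concretely I would either establish a vanishing theorem for bounded solutions of this system through an Omori--Yau type maximum principle (exploiting that $\operatorname{Ric}^g$ is bounded below), or invoke the rigidity of complete affine hyperspheres with constant-curvature Blaschke metric; either route gives $K=0$, hence $\nabla=\nabla^g$, with the value of $\lambda$ then read off from the second identity.
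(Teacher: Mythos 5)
Your reduction to the two identities is sound: constant curvature $\lambda$ gives $R=\overline{R}$ (hence conjugate symmetry and total symmetry of $\nabla^gK$) and a symmetric $\operatorname{Ric}$, so Theorem \ref{determtche} and Lemma \ref{lapC} do apply, and your trace of the Gauss equation recovers the relation $(\lambda-c)m(m-1)=g(T,T)-g(K,K)$ that the paper instead imports from \cite{MR3422914}. Your treatment of case $(1)$ by integrating $\Delta_g\|K\|^2=2c(m+1)\|K\|^2+2\|\nabla^gK\|^2$ over the compact sphere is a valid, self-contained replacement for the paper's citation of Theorem 4.4 of \cite{OPOZDA2016134}, and case $(2)$ is essentially the paper's argument (the paper concludes $\nabla^gK=0$ directly from $\Delta_g\|K\|^2=0$ and $F=0$, with no need for Liouville, but your version is also correct).

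Case $(3)$, however, is a genuine gap, and you have only named it, not closed it. Since $\|K\|^2=(c-\lambda)m(m-1)$ is already constant, Lemma \ref{lapC} yields only the consistent identity $\|\nabla^gK\|^2=-c(m+1)\|K\|^2$, and an Omori--Yau maximum principle applied to the constant function $\|K\|^2$ has nothing to bite on, so your first proposed route does not obviously terminate. Your second route (rigidity of complete affine hyperspheres) is closer to what is needed but is not carried out: one must still prove that a trace-free Codazzi tensor $K$ with $[K_X,K_Y]=(\lambda-c)(X\wedge Y)$ and constant norm on complete $\mathbb{H}^m(c)$ vanishes --- already for $m=2$ this is the nontrivial fact that the Poincar\'e metric admits no holomorphic cubic differential of constant nonzero norm. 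The paper disposes of cases $(1)$ and $(3)$ simultaneously by citing Theorem 4.4 of \cite{OPOZDA2016134}, which is precisely this missing rigidity statement for complete metrics; without that citation or a worked-out substitute, your proof of case $(3)$ is incomplete. (A minor further point: with $K=0$ and $T=0$ your second identity gives $\lambda=c$ in case $(3)$, whereas the statement records $\lambda=-c$; you should flag this sign discrepancy rather than claim to read off the stated value.)
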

\begin{proof}
    We have
    \begin{equation}
        (R-R^g)(X,Y)Z=(\lambda-c)(g(Y,Z)X-g(X,Z)Y),\quad X,Y,Z\in\Gamma(TM).
    \end{equation}
    In the cases $(1)$ and $(3)$, by Theorem \ref{determtche} we have $T=0$ on $M$. 
    By Theorem $4.4$ of \cite{OPOZDA2016134}, it follows that $\nabla=\nabla^g$.
    
    By equation $(39)$ in \cite{MR3422914}, on a statistical manifold of constant curvature $\lambda$, the relation
    \begin{equation*}
        \lambda\cdot m(m-1)=\widehat{\rho}+g(T,T)-g(K,K)
    \end{equation*}
    holds, where $\widehat{\rho}$ is the scalar curvature of $g$. 
    Thus, in case $(2)$, the function $g(K,K)$ is constant on $M$, yielding $\lambda\cdot m(m-1)=g(T,T)-g(K,K)$. 
    Applying Lemma \ref{lapC}, we have 
    \begin{equation*}
        2g(\nabla^gK,\nabla^gK)=0,
    \end{equation*}
    which implies $\nabla^gK=0$. Therefore, we have $(\ref{determst})$.
\end{proof}



\end{document}